\documentclass[a4paper]{amsart}
\usepackage{amsfonts}
\usepackage{amssymb}
\usepackage{nicefrac}
\usepackage{enumerate}
\usepackage{url,bm}
\usepackage{color}      
\usepackage{capt-of}
\usepackage{graphicx}

\parskip 2pt

\sloppy

\newcommand{\R}{\mathbb{R}}
\newcommand{\N}{\mathbb{N}}
\newcommand{\C}{\mathbb{C}}
\newcommand{\Z}{\mathbb{Z}}
\newcommand{\esssup}{\mathop{\rm ess\,sup}}

\newcommand{\w}{\omega}

\newcommand{\s}{\bm{\mathcal{F}}}
\newcommand{\g}{\bm{\mathcal{G}}}
\newtheorem{theorem}{Theorem}[section]
\newtheorem{lemma}[theorem]{Lemma}
\newtheorem{corollary}[theorem]{Corollary}
\newtheorem{proposition}[theorem]{Proposition}

\theoremstyle{remark}
\newtheorem{remark}[theorem]{Remark}
\newtheorem{example}[theorem]{Example}

\theoremstyle{definition}
\newtheorem{definition}[theorem]{Definition}

\date{December 2013}
\subjclass[2010]{Primary 42C40; Secondary 42C15}
\keywords{shift invariant space, minimal generator set, finitely generated shift invariant space, frame, Riesz basis, Gramian}
\thanks{The research of the authors is partially supported by Grants: CONICET PIP 398, PICT 2011-436 and
UBACyT 20020100100638.}

\begin{document}
\title{Linear combinations of frame generators in systems of translates}
\author[ C. Cabrelli, C. A. Mosquera and V. Paternostro]{C. Cabrelli, C. A. Mosquera and V. Paternostro}

\address{\textrm{(C. Cabrelli)}
Departamento de Matem\'atica,
Facultad de Ciencias Exac\-tas y Naturales,
Universidad de Buenos Aires, Ciudad Universitaria, Pabell\'on I,
1428 Buenos Aires, Argentina and
IMAS-CONICET, Consejo Nacional de Investigaciones
Cient\'ificas y T\'ecnicas, Argentina}
\email{cabrelli@dm.uba.ar}

\address{\textrm{(C. A. Mosquera)}
Departamento de Matem\'atica,
Facultad de Ciencias Exac\-tas y Naturales,
Universidad de Buenos Aires, Ciudad Universitaria, Pabell\'on I,
1428 Buenos Aires, Argentina and
IMAS-CONICET, Consejo Nacional de Investigaciones
Cient\'ificas y T\'ecnicas, Argentina}
\email{mosquera@dm.uba.ar}

\address{\textrm{(V. Paternostro)}
Departamento de Matem\'atica,
Facultad de Ciencias Exac\-tas y Naturales,
Universidad de Buenos Aires, Ciudad Universitaria, Pabell\'on I,
1428 Buenos Aires, Argentina and
IMAS-CONICET, Consejo Nacional de Investigaciones
Cient\'ificas y T\'ecnicas, Argentina}
\email{vpater@dm.uba.ar}

\begin{abstract}
A finitely generated shift invariant space $V$ is a closed subspace of  $L^2(\R^d)$ that can be generated by the integer translates of
a finite number of functions.
A set of frame generators for $V$ is a set of functions whose  integer translates form a frame for $V$.
In this note we give necessary and sufficient conditions in order that a minimal set of frame generators can be obtained by taking linear
combinations of the given frame generators.
Surprisingly the results are very different to the recently studied case when the property to be a frame is not required.
\end{abstract}

\maketitle

\section{Introduction}

 {\it Shift invariant spaces} (SISs) are  closed subspaces of $L^2(\R^d)$ that are invariant under integer translations. They play an important role in approximation theory, harmonic analysis, wavelet
theory, sampling and signal processing \cite{AG01, Gro01, HW96, Mal89}. The structure of these spaces has been deeply analyzed  (see for example \cite{dBDVR94, dBDVRI94, Bow00, Hel64, RS95}).

A set of functions $\Phi$ is a {\it set of generators} for a shift invariant space $V$ if the closure of the space spanned by all integer translations of the functions in $\Phi$
agrees with $V.$ When there exists a finite set of generators $\Phi$ for $V$, we say that $V$ is finitely generated. In this case, there exists a positive integer $\ell$,  called the length of $V$, that is defined as the minimal number of functions that generate $V.$ Any set of generators of $V$ with $\ell$ elements will be called a {\it minimal set of generators.}

Let    $\Phi=\{\phi_1,\cdots,\phi_m\}$ be a set of generators for a shift invariant space $V$. It is interesting to know whether it is possible to obtain a minimal set of generators from the given generators in $\Phi$.  There are many examples with the property that no subset of $\Phi$ is a minimal set of generators.
So, deleting elements from $\Phi$ may not be a successfull procedure.

Concerning this question, Bownik and Kaiblinger in \cite{BK06}, showed that a minimal set of generators for $V$ can be obtained from $\Phi$ by  linear combinations of its elements. Moreover, they proved that almost every set of $\ell$ functions that are linear combinations of
$\{\phi_1,\cdots,\phi_m\}$  is a minimal set of generators for  $V$ (see \cite[Theorem 1]{BK06}).
We emphasize that the linear combinations only involve the functions  $\{\phi_1,\cdots,\phi_m\}$ and not their translations.

Since linear combinations of a finite number of functions   preserve  properties such as smoothness, compact support, bandlimitedness, decay, etc,  an interesting consequence of Bownik and Kaiblinger's result is that
if the generators for $V$ have some additional property, there exists a minimal set of generators that inherits this property.

In many problems involving shift invariant spaces, it is important that the system of translates $\{T_k\phi_j\colon k\in\Z^d,\,\,j=1,\cdots, m\}$ bears a particular functional analytic structure such
as being an orthonormal basis, a Riesz basis or a frame. Therefore, it is interesting to know when a minimal
set of generators obtained by taking linear combinations of the original one  has the same structure.
More precisely, suppose that $\Phi=\{\phi_1,\cdots,\phi_m\}$ generates a shift invariant space $V$ of length $\ell$,
and assume that a new set of generators $\Psi=\{\psi_1,\cdots, \psi_{\ell}\}$ for $V$ is produced by taking linear combinations of the functions in $\Phi$. That is, assume that   $\psi_i=\sum_{j=1}^m a_{ij}\phi_j$ for $i=1,\cdots,\ell$ for some complex scalars $ a_{ij}$.
If we collect the coefficients in a matrix $A=\{a_{ij}\}_{i,j}\in \C^{\ell\times m}$, then we can write in  matrix  notation
$\Psi=A\Phi$. We would like to know, which matrices $A$ transfer the structure of $\Phi$ over $\Psi$. Precisely, we study the following question:  If we know that $\{T_k\phi_i\colon k\in\Z^d,\,\,i=1,\cdots, m\}$ is a frame for $V$, when  will $\{T_k\psi_i\colon k\in\Z^d,\,\,i=1,\cdots, \ell\}$ also be a frame for $V$?

In this paper we  answer this question completely.
As we mentioned before, the property of being  a ``set of generators" for a SIS $V$ is generically preserved
by the action of a matrix $A$ (\cite{BK06}). This is not anymore valid  for the  case of frames.
This is an  unexpected result. More than that, we were able to construct a surprising example of a  shift invariant space $V$
with a set of  generators $\Phi$ such that their integer translates
$\{T_k\phi_i\colon k\in\Z^d,\,\,i=1,\cdots,m\}$ form a frame for $V$ and with the property that  no matrix $A,$ of size $\ell\times m$ with $\ell<m,$
transform $\Phi$ into a new set of generators that form a frame for $V$.

Our main result gives exact conditions  in order that the frame property is preserved by a matrix $A$.
These conditions are in terms of a particular geometrical relation that has to be satisfied between the
nullspace of $A$ and the column space of $G_{\Phi}(\omega)$ for almost all $\omega$.
The proof uses  recent results about singular values of composition of operators
and involves the Friedrichts angle between subspaces in Hilbert spaces.
We also provide an equivalent analytic condition between $A$ and $G_{\Phi}(\omega)$ in order that this same result holds. Although we are interested in the case $\ell=\ell(V)$ (the length of the SIS $V$ under study),  most of our results are still valid when $\ell(V)\le \ell\le m.$

For completeness we include the particular  case of Riesz bases and orthonormal bases that are known.

The paper is organized as follows. In Section \ref{preliminaries} we set the definitions and results that we need. We include some
results about the eigenvalues of  conjugated matrices in  Section \ref{matrix}. Finally, in Section \ref{results} we  state and prove our main results.


\section{Preliminaries}\label{preliminaries}
We start this section by giving the basic definitions. Then, we state some known result about shift invariant spaces that we will need later.

\begin{definition}
Let $\mathcal{H}$ be a separable Hilbert  space and   $\{f_k\}_{k\in \Z}$ be a sequence in $\mathcal{H}.$

\begin{enumerate}
\item[$(a)$]
The sequence $\{f_k\}_{k\in \Z}$ is said to be a {\it Riesz basis} for $\mathcal{H}$
if it is complete in
$\mathcal{H}$ and if there exist $0<\alpha\leq \beta$ such that
 for every finite scalar sequence
$\{c_k\}_{k\in \Z}$ one has
$$\alpha \sum_{k\in\Z} |c_k|^2 \leq \|\sum_{k\in\Z} c_k f_k\|^2\leq
\beta\sum_{k\in\Z} |c_k|^2.$$
The constants $\alpha$ and $\beta$ are called {\it  Riesz bounds}.
\item[$(b)$]
The sequence $\{f_k\}_{k\in \Z}$ is said to be  a {\it frame} for $\mathcal{H}$ if there exist $0<\alpha\leq \beta$ such that
\begin{equation}\label{eq-frame}
\alpha\|f\|^2\leq \sum_{k\in \Z} |\langle f,f_k\rangle|^2\leq \beta\|f\|^2
\end{equation}
for all $f\in\mathcal{H}$. The constants $\alpha$ and $\beta $ are called {\it frame bounds}.

When only the right hand side inequality in (\ref{eq-frame}) is satisfied, we say that $\{f_k\}_{k\in \Z}$ is  {\it Bessel sequence} with Bessel bound $\beta$.
\end{enumerate}
\end{definition}

In this paper we will work with the above definitions in the following context. We will assume that $\mathcal{H}$ is a closed subspace of $L^2(\R^d)$ and the sequence $\{f_k\}_{k\in \Z}\subseteq \mathcal{H}$ consists of  integer translates of a fixed finite set of functions $\Phi\subseteq L^2(\R^d).$

\begin{definition}
We say that a closed subspace $V\subseteq  L^2(\R^d)$ is {\it shift invariant} if
$$f\in V\Longrightarrow T_kf\in V, \,\, \textrm{ for any }\,\,k\in\Z^d,$$
 where $T_k$ is the translation by the vector $k\in\Z^d,$ i.e. $T_kf(x)=f(x-k)$.

For any subset $\Phi\subseteq  L^2(\R^d)$ we define \[
S(\Phi)= \overline{\mbox{span}}\{T_k\phi\colon \phi\in\Phi, k\in\Z^d\}\,\,\textrm{ and }\,\,
E(\Phi)= \{T_k\phi\colon \phi\in\Phi, k\in\Z^d\}.
\]

We call $S(\Phi)$ the shift invariant space (SIS) generated by $\Phi$.
If $V=S(\Phi)$ for some finite set $\Phi$  we say that $V$ is a {\it finitely generated} SIS, and a
{\it principal} SIS if $V$ can be generated by the translates of a single function.
\end{definition}

For a finitely generated SIS $V\subseteq  L^2(\R^d)$ we define the length of $V$ as
\[
\ell(V)= \min\{n\in\N\colon \exists \,\,\phi_1, \cdots,\phi_n \in
V \textrm{ with } V=S(\phi_1, \cdots,\phi_n)\}.
\]
We say that $\Phi$ is a {\it minimal set of generators for $V$} if $V=S(\Phi)$ and $\Phi$ has exactly $\ell(V)$ elements.

Helson in \cite{Hel64} introduced  range functions and used this notion to completely characterize shift  invariant spaces.
Later on, several authors have used this framework to describe and characterize frames and bases of these spaces.
See for example   \cite{dBDVR94, dBDVRI94, RS95, Bow00, CP10}.
We are not going to review here the complete theory of Helson. We will only mention the required definitions and the properties that we need in this note. We refer to \cite{Bow00} for a clear and complete description.

\begin{definition}
Given $f\in L^2(\R^d)$ and $\w\in[-\frac{1}2, \frac{1}2]^d,$ the {\it fiber} $\widetilde{f_{\w}}$ of $f$ at $\w$ is the sequence
\[
\widetilde{f_{\w}} \equiv \{\widehat{f}(\w+k)\}_{k\in\Z^d}.
\]
\end{definition}
Here $\widehat{f}$ denotes the Fourier transform of the function $f,$ $\widehat{f}(\w)\!= \int_{\R^d}e^{-2\pi i\w x}f(x)\, dx$ when $f\in L^1(\R^d).$
We observe that if $f\in L^2(\R^d),$ then the fiber $\widetilde{f_{\w}}$ belongs to $\ell^2(\Z^d)$ for almost every $\w\in[-\frac{1}2, \frac{1}2]^d.$

Let $\Phi= \{\phi_1,\cdots,\phi_m\}$ be a finite collection of functions in $L^2(\R^d)$.
The \emph{Gramian} $G_\Phi$ of $\Phi$ is the
$m\times m$ matrix of $\Z^d$-periodic functions
\begin{equation} \label{gram}
[G_{\Phi}(\omega)]_{ij}
=
\sum_{k \in \Z^d} \widehat{\phi}_i(\omega+k) \, \overline{\widehat{\phi}_j(\omega+k)}.
\end{equation}
The Gramian of $\Phi$ is determined a.e. by its values at $\omega\in [-\frac{1}2, \frac{1}2]^d$ and satisfies $G_{\Phi}(\omega)^*=G_{\Phi}(\omega)$ for a.e. $\omega\in [-\frac{1}2, \frac{1}2]^d$. We denote the set of eigenvalues of $G_{\Phi}(\omega)$ by $\Sigma(G_{\Phi}(\omega))$.

For  a finitely generated SIS $V$,  the length of $V$ can be expressed in terms of the Gramian as follows (see \cite{Bow00, dBDVR94, TW12})
\begin{equation}\label{length-gramian}
\ell(V)=\esssup_{\w\in [-\frac1{2}, \frac1{2}]^d}\text{rk}(G_{\Phi}(\w))
\end{equation}
where $\text{rk}(B)$ denotes the rank of a matrix $B$ and $\Phi$ is a generator set for $V$.

For $B_1, B_2\in \C^{m\times m}$, we write $B_1\leq B_2$ meaning that $B_2-B_1$ is a positive semidefinite matrix. Using the Gramian matrix, the following characterizations hold (see \cite{Bow00}).
\begin{proposition}\label{gramiano-frame}
Let $\Phi$ a finite set of functions in $L^2(\R^d)$ and $V=S(\Phi)$. Then,
\begin{enumerate}
\item the following statements are equivalent:
\begin{enumerate}
\item $E(\Phi)$ is a Bessel sequence with bound $0<\beta$.
\item $\esssup_{\w\in [-\frac1{2}, \frac1{2}]^d} \|G_{\Phi}(\w)\|\le \beta.$
\end{enumerate}

\item the following statements are equivalent:
\begin{enumerate}
\item $E(\Phi)$ is a Riesz basis for $V$ with bounds $0<\alpha\leq \beta$.
\item For a.e. $\w\in [-\frac1{2}, \frac1{2}]^d$,
$\alpha I\leq G_{\Phi}(\w)\leq I\beta .$
\item For a.e. $\w\in [-\frac1{2}, \frac1{2}]^d$,  $\Sigma(G_{\Phi}(\w))\subseteq [\alpha, \beta]$.
\item For a.e. $\w\in [-\frac1{2}, \frac1{2}]^d,$ $G_{\Phi}(\w)$ is invertible, $\|G_{\Phi}(\w)\|\le \beta$ and $\|(G_{\Phi}(\w))^{-1}\|\le \frac{1}{\alpha}.$
\end{enumerate}
\item the following statements are equivalent:
\begin{enumerate}
\item $E(\Phi)$ is a frame for $V$ with bounds $0<\alpha\leq \beta$.
\item For a.e. $\w\in [-\frac1{2}, \frac1{2}]^d$,
$\alpha G_{\Phi}(\w)\leq G_{\Phi}^2(\w)\leq \beta G_{\Phi}(\w).$
\item For a.e. $\w\in [-\frac1{2}, \frac1{2}]^d$,  $\Sigma(G_{\Phi}(\w))\subseteq [\alpha, \beta]\cup\{0\}$.
\end{enumerate}
\end{enumerate}
\end{proposition}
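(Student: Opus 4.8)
The plan is to transport the entire question to the fiber space of Helson's theory and reduce the proposition to pointwise (in $\w$) spectral information about $G_\Phi$. Recall (see \cite{Bow00}) the isometric isomorphism $\mathcal{T}\colon L^2(\R^d)\to L^2\big([-\frac12,\frac12]^d,\ell^2(\Z^d)\big)$, $\mathcal{T}f(\w)=\widetilde{f_\w}$, under which each $T_k$ acts as multiplication by $e^{-2\pi i k\cdot\w}$ and $V=S(\Phi)$ is carried onto the measurable field of subspaces $J(\w)=\mathrm{span}\{\widetilde{(\phi_1)_\w},\dots,\widetilde{(\phi_m)_\w}\}\subseteq\ell^2(\Z^d)$, with $\dim J(\w)=\mathrm{rk}(G_\Phi(\w))$ for a.e.\ $\w$. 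For a.e.\ $\w$ I would introduce the analysis operator of the fiber system, $\Gamma_\w\colon\ell^2(\Z^d)\to\C^m$, $(\Gamma_\w a)_j=\langle a,\widetilde{(\phi_j)_\w}\rangle$. Then $\Gamma_\w^*c=\sum_j c_j\widetilde{(\phi_j)_\w}$, so $\Gamma_\w\Gamma_\w^*=\overline{G_\Phi(\w)}$, a positive semidefinite matrix with the same spectrum and operator norm as $G_\Phi(\w)$ (the conjugation is a harmless artifact of the Gramian convention), while $\Gamma_\w^*\Gamma_\w$ is the frame operator of $\{\widetilde{(\phi_j)_\w}\}_j$ in $\ell^2(\Z^d)$: it annihilates $\ker\Gamma_\w=J(\w)^\perp$, and on $\mathrm{range}\,\Gamma_\w^*=J(\w)$ it is positive with spectrum equal to the set of nonzero eigenvalues of $G_\Phi(\w)$.

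The next step is to record, via Plancherel and periodization, the identities
\[
\sum_{j=1}^m\sum_{k\in\Z^d}|\langle f,T_k\phi_j\rangle|^2=\int_{[-\frac12,\frac12]^d}\langle\Gamma_\w^*\Gamma_\w\widetilde{f_\w},\widetilde{f_\w}\rangle\,d\w,\qquad \|f\|^2=\int_{[-\frac12,\frac12]^d}\|\widetilde{f_\w}\|^2\,d\w
\]
for $f\in L^2(\R^d)$ (the first because $k\mapsto\langle f,T_k\phi_j\rangle$ is the Fourier coefficient sequence of $\w\mapsto\langle\widetilde{f_\w},\widetilde{(\phi_j)_\w}\rangle$), together with, for a finitely supported array $c=\{c_{j,k}\}$ with $\widehat{c}(\w)=\big(\sum_k c_{j,k}e^{-2\pi i k\cdot\w}\big)_{j=1}^m$,
\[
\Big\|\sum_{j,k}c_{j,k}T_k\phi_j\Big\|^2=\int_{[-\frac12,\frac12]^d}\langle\Gamma_\w\Gamma_\w^*\widehat{c}(\w),\widehat{c}(\w)\rangle\,d\w,\qquad \sum_{j,k}|c_{j,k}|^2=\int_{[-\frac12,\frac12]^d}\|\widehat{c}(\w)\|^2\,d\w.
\]
The decisive step is the \emph{localization principle}: since $J$ is a measurable range function, any bounded measurable section $\w\mapsto v(\w)\in J(\w)$ supported on a positive-measure set equals $\widetilde{f_\w}$ for some $f\in V$, so an integrated estimate $a\int\|\widetilde{f_\w}\|^2\,d\w\le\int\langle M(\w)\widetilde{f_\w},\widetilde{f_\w}\rangle\,d\w\le b\int\|\widetilde{f_\w}\|^2\,d\w$ over all $f\in V$ is equivalent to the pointwise estimate $a\|v\|^2\le\langle M(\w)v,v\rangle\le b\|v\|^2$ for all $v\in J(\w)$ and a.e.\ $\w$ (failure on a positive-measure set is contradicted by a section concentrated there), and the same holds one-sidedly; likewise, an estimate over all trigonometric polynomials $\widehat{c}$ is equivalent, by density, to the pointwise estimate over all $y\in\C^m$ and a.e.\ $\w$.

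With this machinery the proof is bookkeeping plus finite-dimensional spectral theory. For item (3): the first identity and the localization principle give that $E(\Phi)$ is a frame for $V$ with bounds $\alpha\le\beta$ iff $\alpha\|v\|^2\le\langle\Gamma_\w^*\Gamma_\w v,v\rangle\le\beta\|v\|^2$ for all $v\in J(\w)$ and a.e.\ $\w$; since the spectrum of $\Gamma_\w^*\Gamma_\w|_{J(\w)}$ is the set of nonzero eigenvalues of $G_\Phi(\w)$, this is precisely $\Sigma(G_\Phi(\w))\subseteq[\alpha,\beta]\cup\{0\}$ a.e., proving (3)(a) $\Leftrightarrow$ (3)(c). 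For (3)(b) $\Leftrightarrow$ (3)(c) one diagonalizes $G_\Phi(\w)=U\,\mathrm{diag}(\lambda_1,\dots,\lambda_m)\,U^*$ with $\lambda_i\ge0$ and observes that $\alpha G_\Phi(\w)\le G_\Phi^2(\w)\le\beta G_\Phi(\w)$ decouples into $\alpha\lambda_i\le\lambda_i^2\le\beta\lambda_i$, i.e.\ $\lambda_i\in\{0\}\cup[\alpha,\beta]$, for each $i$. For item (1): the same argument with only the upper bound shows $E(\Phi)$ is Bessel with bound $\beta$ iff $\langle\Gamma_\w^*\Gamma_\w v,v\rangle\le\beta\|v\|^2$ for $v\in J(\w)$ a.e.; as $\Gamma_\w^*\Gamma_\w$ annihilates $J(\w)^\perp$, this is $\|\Gamma_\w^*\Gamma_\w\|=\|\Gamma_\w\Gamma_\w^*\|=\|G_\Phi(\w)\|\le\beta$ a.e., i.e.\ (1)(b).

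For item (2): $E(\Phi)$ is automatically complete in $V=S(\Phi)$, so it is a Riesz basis with bounds $\alpha\le\beta$ iff the two-sided Riesz inequality holds; by the second pair of identities and the localization principle this is $\alpha\|y\|^2\le\langle\Gamma_\w\Gamma_\w^*y,y\rangle\le\beta\|y\|^2$ for all $y\in\C^m$ and a.e.\ $\w$, i.e.\ $\alpha I\le\overline{G_\Phi(\w)}\le\beta I$, equivalently $\alpha I\le G_\Phi(\w)\le\beta I$; this is (2)(b). Finally (2)(b) $\Leftrightarrow$ (2)(c) $\Leftrightarrow$ (2)(d) are the elementary facts that a Hermitian matrix $B$ satisfies $\alpha I\le B\le\beta I$ iff $\Sigma(B)\subseteq[\alpha,\beta]$ iff ($\alpha>0$ and) $B$ is invertible with $\|B\|\le\beta$ and $\|B^{-1}\|\le1/\alpha$. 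The only genuinely delicate ingredient throughout is the measurable-selection argument underlying the localization principle; I expect that to be the main obstacle, the remainder being the Fourier/periodization identities and the spectral theory of Hermitian matrices.
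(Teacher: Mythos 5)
The paper does not prove this proposition at all: it is quoted as a known characterization with a citation to Bownik \cite{Bow00}, so there is no internal proof to compare against. Your fiberization argument is precisely the standard proof from that literature (Helson's range-function framework, the analysis operator on fibers, and the identification of the frame/Riesz inequalities with pointwise spectral bounds on $G_\Phi(\w)$), and it is correct in outline: the operator identities $\Gamma_\w\Gamma_\w^*=\overline{G_\Phi(\w)}$, the periodization formulas, and the finite-dimensional spectral reductions (including the diagonalization giving $(3)(b)\Leftrightarrow(3)(c)$ and the elementary equivalences in item (2)) are all right. Two technical points deserve the care you partly flagged: first, the localization principle requires a measurable selection of violating vectors (restrict to a positive-measure set where the bound fails by a fixed margin and use Helson's theorem that $\mathcal{T}V$ consists exactly of the measurable sections of $J$); second, in item (2) the passage from the integrated inequality over trigonometric polynomials to the pointwise bound $\alpha I\le G_\Phi(\w)\le\beta I$ is not a plain density argument, since a priori the entries of $G_\Phi$ are only in $L^1$ and the quadratic form need not be $L^2$-continuous — the standard repair is to test with $F(\w)=p(\w)y$ for scalar trigonometric polynomials $p$ and fixed $y\in\C^m$, use Fej\'er kernels and Lebesgue points to get the scalar inequality a.e.\ for each $y$ in a countable dense set, and only then extend. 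With those two steps filled in, your proposal is a complete proof along the same lines as the source the paper cites.
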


As we  mentioned above, we are interested  in when a set of  linear combinations of the generators for a finitely generated SIS inherits some particular structure from the original generators. In order to make clear our exposition we use the following notation.

Let  $\Phi=\{\phi_1,\cdots,\phi_m\}$ be a set of functions in $L^2(\R^d)$.
By taking linear combinations of the elements of $\Phi$, we construct a new set $\Psi=\{\psi_1,\cdots, \psi_{\ell}\}$
where $\psi_i=\sum_{j=1}^ma_{ij}\phi_j$ and
$A=\{a_{ij}\}_{i,j}\in \C^{\ell\times m}$ is a matrix.
Using a matrix notation we set $\Psi=A\Phi$.
Then, we consider the following questions:

Let $V=S(\Phi)$ where $\Phi=\{\phi_1,\cdots,\phi_m\}$.
\begin{itemize}
\item If $E(\Phi)$ is a frame
for $V$ and $\ell(V)\leq \ell\leq m$, for which matrices $A\in \C^{\ell\times m}$, is $E(A\Phi)$ a frame
for $V$?
\item If $E(\Phi)$ is an orthonormal basis (Riesz basis)
for $V$, for which square matrices $A$, is $E(A\Phi)$ an orthonormal basis (Riesz basis)
for $V$?

\end{itemize}

Sometimes in the paper  by convenient abuse of notation,
we will say that a set $\Phi=\{\phi_1,\cdots, \phi_m\}$ is a frame for a SIS $V$ to indicate that actually $E(\Phi)$ forms a frame for $V$.

In order to study when $E(A\Phi)$ is a frame or a Riesz basis for $V$ we will use Proposition \ref{gramiano-frame}.
So, we need to know   the Gramian associated to
$A\Phi$.

\begin{proposition}\label{gramiano-A}
Let $V=S(\Phi)$ be a SIS where $\Phi=\{\phi_1,\cdots,\phi_m\}$  and let
$A=\{a_{ij}\}_{i,j}\in \C^{\ell\times m}$ be a matrix. Consider the set
$\Psi=\{\psi_1,\cdots, \psi_{\ell}\}$ where $\Psi=A\Phi$.
Then, the Gramian  $G_{\Psi}$ is a conjugation of $G_{\Phi}$ by $A,$ i.e.
$$G_{\Psi}(\w)=AG_{\Phi}(\w)A^*$$
for a.e. $\w\in [-\frac1{2}, \frac1{2}]^d.$
\end{proposition}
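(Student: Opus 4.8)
The plan is a direct fiberwise computation. First I would record that the Fourier transform is linear, so the relation $\psi_i=\sum_{j=1}^m a_{ij}\phi_j$ transforms into $\widehat{\psi_i}(\xi)=\sum_{j=1}^m a_{ij}\widehat{\phi_j}(\xi)$ for a.e. $\xi\in\R^d$; in particular each $\psi_i$, being a finite linear combination of $L^2$ functions, lies in $L^2(\R^d)$, so $G_\Psi$ is well defined a.e. by \eqref{gram}.

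Next I would fix $\w\in[-\tfrac12,\tfrac12]^d$ outside a null set (where all the fibers $\widetilde{(\phi_j)_\w}$ belong to $\ell^2(\Z^d)$, as noted after the definition of fiber) and compute the $(i,r)$ entry of $G_\Psi(\w)$ by plugging the expression for $\widehat{\psi_i}$ into \eqref{gram}:
\[
[G_\Psi(\w)]_{ir}=\sum_{k\in\Z^d}\Big(\sum_{j=1}^m a_{ij}\widehat{\phi_j}(\w+k)\Big)\,\overline{\Big(\sum_{s=1}^m a_{rs}\widehat{\phi_s}(\w+k)\Big)}.
\]
The key (and only) technical point is to justify interchanging the finite sums over $j$ and $s$ with the series over $k\in\Z^d$. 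Since the fibers are in $\ell^2(\Z^d)$, the Cauchy--Schwarz inequality gives $\sum_{k}|\widehat{\phi_j}(\w+k)|\,|\widehat{\phi_s}(\w+k)|\le \|\widetilde{(\phi_j)_\w}\|_{\ell^2}\|\widetilde{(\phi_s)_\w}\|_{\ell^2}<\infty$ for every pair $(j,s)$; hence the double-indexed family is absolutely summable and the interchange (a finite sum of such series) is legitimate.

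After the interchange the inner series is exactly $[G_\Phi(\w)]_{js}$, so
\[
[G_\Psi(\w)]_{ir}=\sum_{j=1}^m\sum_{s=1}^m a_{ij}\,[G_\Phi(\w)]_{js}\,\overline{a_{rs}}=\sum_{j=1}^m\sum_{s=1}^m a_{ij}\,[G_\Phi(\w)]_{js}\,[A^*]_{sr}=[AG_\Phi(\w)A^*]_{ir},
\]
which is the claimed identity for a.e.\ $\w$. I do not anticipate any real obstacle here; the proof is essentially bookkeeping, with the absolute-summability argument being the one spot that genuinely needs a sentence of justification.
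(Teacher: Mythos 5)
Your proof is correct and follows essentially the same route as the paper: the paper writes $[G_\Psi(\w)]_{ij}$ as the $\ell^2(\Z^d)$ inner product of the fibers $(\widetilde{\psi_i})_\w$ and uses sesquilinearity, which is exactly your computation, with your Cauchy--Schwarz step being the (implicit) justification that these fiber inner products are well defined. Nothing is missing; if anything, you spell out the interchange of sums more carefully than the paper does.
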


\begin{proof}
Let $i,j\in\{1,\cdots,\ell\}$ be fixed. Then,
\begin{align*}
[G_{\Psi}(\omega)]_{ij}
=
\Big\langle (\widetilde{\psi_i)}_{\omega}, (\widetilde{\psi_j})_{\omega}\Big\rangle
&=
\Big\langle \sum_{k=1}^ma_{ik}(\widetilde{\phi_k})_{\omega}, \sum_{r=1}^ma_{ir}(\widetilde{\phi_r})_{\omega}\Big\rangle\\
=
\sum_{k=1}^m\sum_{r=1}^ma_{ik}\overline{a_{ir}}\Big\langle (\widetilde{\phi_k})_{\omega}, (\widetilde{\phi_r})_{\omega}\Big\rangle
&=\sum_{k=1}^m\sum_{r=1}^ma_{ik}\overline{a_{ir}}[G_{\Phi}(\omega)]_{kr}
=
[AG_{\Psi}(\omega)A^*]_{ij}.
\end{align*}
\end{proof}
We study in the following section the eigenvalues of a conjugated matrix,
and provide the definition of the Friedrichs angle between subspaces, that we will need to state the main results.


\section{Eigenvalues of a conjugated matrix}\label{matrix}

According to Proposition \ref{gramiano-frame}, we will need to study the eigenvalues of the Gramian $G_{\Psi}(\omega)$
which is, as Proposition \ref{gramiano-A} shows, a conjugation of $G_{\Phi}(\omega)$ by $A$.
 The behaviour of the eigenvalues of the conjugation of a given matrix is in general
not very well established. In our case, we shall to find uniform bounds for the eigenvalues of
$G_{\Psi}(\omega)$.

In this section we first set some matrix notation  and then we state the known results
that we will later apply to shift invariant spaces.

For a matrix $B\in\C^{\ell\times m}$
we denote by $\sigma(B)$ the smallest non-zero singular value of $B$. By $Ker(B)$ and $Im(B)$ we denote the nullspace and column space
of $B$ respectively, as an operator acting by right multiplication, i.e., matrix-vector multiplication.

For a squared positive-semidefinite matrix  $B$ such that $B=B^*$,  the eigenvalues and its singular values agree. In particular, $\sigma(B)=\lambda_-(B)$ where
$\lambda_-(B)$ denotes the smallest non-zero eigenvalue  of $B$.

We now state a recent result that we need for next section. It was proven by Antezana et al. in \cite{ACRS05}.
For this, we need the notion of Friedrichs angle between subspaces.
 The Friedrichs angle can be defined for subspaces of a general Hilbert space (see \cite{Deu95, HJ95, Ka84}).
However, we will define it  for subspaces of $\C^n$, since this is the context
on which we will use it.

Let $N, M\neq \{0\}$ be subspaces of $\C^n$. The {\it Friedrichs  angle between $M$ and $N$}
is the angle in $[0, \frac{\pi}{2}]$ whose cosine is defined by
$$
{\bm{\mathcal{G}}}[M,N]=\sup\{|\langle x,y\rangle|:\, x\in M\cap (M\cap N)^{\perp},\, \|x\|=1, \, y\in N\cap (M\cap N)^{\perp},\, \|y\|=1\}.
$$
We define ${\bm{\mathcal{G}}}[M,N]=0$ if $M=\{0\}$, $N=\{0\}$, $M\subseteq N$ or $N\subseteq M$.

As usual, the sine of the Friedrichs angle is defined as $\s\,[M,N]=\sqrt{1-{\bm{\mathcal{G}}}[M,N]^2}.$
It satisfies $\s[M,N]=\s[N,M]=\s[M^{\perp},N^{\perp}]$ (see \cite{ACRS05}).

The following theorem is a particular case of the result stated in Remark 2.10 in \cite{ACRS05} for  square matrices.

\begin{theorem}\label{prop-jorge}
Let $A, B$ be non zero square matrices in $\C^{m\times m}$. Then,
$$\sigma(A)\sigma(B) \,\s[Ker(A), Im(B)]\leq \sigma(AB)\le \|A\|\|B\| \,\s [Ker(A), Im(B)].$$
\end{theorem}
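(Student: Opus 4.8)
The plan is to fix $N := Ker(A)$ and $M := Im(B)$ and to reduce the statement to two geometric facts. First, working directly from the definition of $\g[M,N]$, one checks that $\|P_N x\| \le \g[M,N]$ for every unit vector $x \in M \cap (M\cap N)^\perp$ (decompose an arbitrary $z \in N$ along $M\cap N$ and $(M\cap N)^\perp$ and use orthogonality of $x$ to $M\cap N$); hence $\|(I-P_N)x\| \ge \s[M,N]\,\|x\|$ for all $x \in M\cap(M\cap N)^\perp$, where $P_N$ is the orthogonal projection onto $N$. Second, since $AP_N = 0$ we have $A = A(I-P_N)$, and $A$ restricted to $N^\perp$ is bounded below by $\sigma(A)$, so $\|Aw\| \ge \sigma(A)\,\|(I-P_N)w\|$ for every $w$. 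I will also use the elementary facts that $\sigma(C) = \min\{\|Cv\|/\|v\|: v \in Ker(C)^\perp\setminus\{0\}\}$ for any matrix $C$, and that $Ker\big((I-P_{M\cap N})B\big) = Ker(AB)$ (both equal $\{u: Bu\in M\cap N\}$, since $Bu\in M$ always and $M\cap N = M\cap Ker(A)$), together with the symmetry $\s[M,N]=\s[N,M]$ noted in the text.

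For the lower bound, take a unit vector $u \in Ker(AB)^\perp$, put $v = Bu \in M$, and split $v = v_1 + v_2$ with $v_1 = P_{M\cap N}v \in M\cap N$ and $v_2 = (I-P_{M\cap N})v \in M\cap(M\cap N)^\perp$. Since $v_1 \in M\cap N\subseteq Ker(A)$ we get $ABu = Av_2 = A(I-P_N)v_2$, and the two facts above give $\|ABu\| \ge \sigma(A)\,\|(I-P_N)v_2\| \ge \sigma(A)\,\s[M,N]\,\|v_2\|$. To finish I need $\|v_2\| = \|(I-P_{M\cap N})Bu\| \ge \sigma(B)$; since $u \perp Ker(AB) = Ker\big((I-P_{M\cap N})B\big)$ this norm is $\ge \sigma\big((I-P_{M\cap N})B\big)$, and the crux is the inequality $\sigma\big((I-P_{M\cap N})B\big) \ge \sigma(B)$. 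I would obtain this from Cauchy's interlacing theorem (Poincar\'e separation theorem): with $\Pi = I-P_{M\cap N}$ the nonzero eigenvalues of $(\Pi B)(\Pi B)^* = \Pi\,BB^*\,\Pi$ are exactly the eigenvalues of the compression of $BB^*$ to the subspace $(M\cap N)^\perp$, which has codimension $d := \dim(M\cap N)$; interlacing bounds its $j$-th largest eigenvalue below by the $(j+d)$-th largest eigenvalue of $BB^*$, and choosing $j = \mathrm{rk}(\Pi B) = \mathrm{rk}(B)-d$ yields $\sigma(\Pi B)^2 \ge \sigma(B)^2$. Assembling the pieces gives $\|ABu\| \ge \sigma(A)\sigma(B)\,\s[N,M]\,\|u\|$ for every $u\in Ker(AB)^\perp$, which is the left-hand inequality.

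For the upper bound, by compactness the supremum defining $\g[M,N]$ is attained, so there is a unit vector $x_0 \in M\cap(M\cap N)^\perp$ with $\|P_N x_0\| = \g[M,N]$, i.e. $\|(I-P_N)x_0\| = \s[M,N]$. Because $x_0 \in M = Im(B)$, choose $u_0 \in Ker(B)^\perp$ with $Bu_0 = x_0$ and let $u_0'$ be the orthogonal projection of $u_0$ onto $Ker(AB)^\perp$ (using $Ker(B)^\perp = \big(Ker(AB)\cap Ker(B)^\perp\big)\oplus Ker(AB)^\perp$). A short computation shows $(I-P_{M\cap N})Bu_0' = x_0$ with $P_{M\cap N}Bu_0' \in M\cap N$, hence $\|Bu_0'\|^2 = \|P_{M\cap N}Bu_0'\|^2 + 1 \ge 1$ and so $\|u_0'\| \ge 1/\|B\|$; moreover $ABu_0' = Ax_0 = A(I-P_N)x_0$, so $\|ABu_0'\| \le \|A\|\,\s[M,N]$. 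Since $x_0 \notin N$ we have $u_0' \in Ker(AB)^\perp\setminus\{0\}$, whence $\sigma(AB) \le \|ABu_0'\|/\|u_0'\| \le \|A\|\|B\|\,\s[M,N] = \|A\|\|B\|\,\s[N,M]$. The degenerate configurations — $AB = 0$, $Ker(A)=\{0\}$, $M\subseteq N$, or $N\subseteq M$ — where the definition of $\g$ falls back on its special conventions, are dispatched separately by a direct check. The step I expect to be the genuine obstacle is the compression inequality $\sigma\big((I-P_{M\cap N})B\big) \ge \sigma(B)$, i.e. understanding how enlarging $Ker(B)$ to $Ker(AB)$ affects the smallest singular value, together with the parallel bookkeeping in the upper bound that ensures the test vector actually lies in $Ker(AB)^\perp$.
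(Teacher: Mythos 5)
Your argument is correct, and it is worth noting that the paper does not actually prove this statement: Theorem~\ref{prop-jorge} is quoted as a special case of Remark~2.10 of Antezana--Corach--Ruiz--Stojanoff \cite{ACRS05}, where it is established in the operator-theoretic language of reduced minimum moduli and oblique projections. Your proof is a self-contained, finite-dimensional alternative. The two geometric facts you isolate (that $\|(I-P_{Ker(A)})x\|\ge \s[M,N]\|x\|$ for $x\in M\cap(M\cap N)^\perp$, and that $A=A(I-P_{Ker(A)})$ is bounded below by $\sigma(A)$ on $Ker(A)^\perp$) are verified correctly, the identification $Ker(AB)=Ker\bigl((I-P_{M\cap N})B\bigr)$ is right, and the one genuinely nontrivial step --- $\sigma\bigl((I-P_{M\cap N})B\bigr)\ge\sigma(B)$ --- goes through exactly as you describe: with $d=\dim(M\cap N)$ and $r=\mathrm{rk}(B)$ one has $\mathrm{rk}\bigl((I-P_{M\cap N})B\bigr)=r-d$ because $M\cap N\subseteq Im(B)$, and Poincar\'e separation applied to the compression of $BB^*$ to $(M\cap N)^\perp$ gives $\mu_{r-d}\ge\lambda_r=\sigma(B)^2$. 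The upper bound via the extremal vector $x_0$ and its preimage projected onto $Ker(AB)^\perp$ is also sound; the bookkeeping $\|Bu_0'\|^2=1+\|P_{M\cap N}Bu_0'\|^2\ge 1$ is exactly what makes the test vector admissible. What your route buys is elementarity (only interlacing is needed, no machinery of angles between ranges of operators); what the cited proof buys is validity for closed-range operators on infinite-dimensional Hilbert spaces, where compactness of the unit sphere and eigenvalue interlacing are unavailable. One caveat you should state explicitly rather than fold into ``degenerate configurations'': when $AB=0$ (equivalently $Im(B)\subseteq Ker(A)$, so $\s=1$ by convention) the quantity $\sigma(AB)$ has no nonzero singular value to refer to, and with the convention $\sigma(0)=0$ the left-hand inequality is false; the statement only makes sense under the convention inherited from the reduced minimum modulus (infimum over the empty set), or with $AB\neq 0$ assumed --- which is the only case the paper ever uses.
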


In order to adapt Theorem \ref{prop-jorge} to our setting we need the following lemma.

\begin{lemma}\label{lema-rango-2}
Let $A\in \C^{\ell\times m}$ and $G\in \C^{m\times m}$ with $\ell\le m$.
If $\text{rk}(AGA^*)=\text{rk}(G)$ then $Ker(AG)=Ker(G)$.
\end{lemma}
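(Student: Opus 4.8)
The plan is to prove the two inclusions $Ker(G)\subseteq Ker(AG)$ and $Ker(AG)\subseteq Ker(G)$ separately. The first is trivial: if $Gx=0$ then $AGx=0$, so $Ker(G)\subseteq Ker(AG)$ always holds, with no hypothesis needed. The whole content of the lemma is therefore the reverse inclusion, and this is where the rank assumption $\text{rk}(AGA^*)=\text{rk}(G)$ must be used.

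For the reverse inclusion I would first reduce the dimension count to the two subspaces that matter. Since $Ker(G)\subseteq Ker(AG)$, the inclusion $Ker(AG)\subseteq Ker(G)$ is equivalent to the equality $\dim Ker(AG)=\dim Ker(G)$, i.e.\ by rank-nullity to $\text{rk}(AG)=\text{rk}(G)$. So it suffices to prove $\text{rk}(AG)=\text{rk}(G)$ from $\text{rk}(AGA^*)=\text{rk}(G)$. Here one clearly has the chain $\text{rk}(AGA^*)\le \text{rk}(AG)\le \text{rk}(G)$ (each multiplication can only decrease rank, and the first one also shows $\text{rk}(AGA^*)\le\text{rk}(AG)$), and since the two ends of the chain are assumed equal, everything in between collapses: $\text{rk}(AG)=\text{rk}(G)$. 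Combined with the observation of the previous paragraph, this gives $Ker(AG)=Ker(G)$.

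Actually the cleanest route avoids even mentioning $AG$ on its own. I would argue directly: let $x\in Ker(AG)$, so $AGx=0$. I want $Gx=0$. The point is that $Gx$ lies in $Im(G)$, and I claim that $A$ restricted to $Im(G)$ is injective — this is exactly what the rank hypothesis buys. Indeed, consider the map $T\colon Im(G)\to\C^\ell$, $T=A|_{Im(G)}$. Its image is $A\,Im(G)=Im(AG)$, whose dimension is $\text{rk}(AG)=\text{rk}(G)=\dim Im(G)$ by the rank computation above; hence $T$ is a bijection from $Im(G)$ onto $Im(AG)$, in particular injective. Since $Gx\in Im(G)$ and $T(Gx)=AGx=0=T(0)$, injectivity forces $Gx=0$, i.e.\ $x\in Ker(G)$. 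Together with the trivial inclusion this proves $Ker(AG)=Ker(G)$.

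I do not expect a genuine obstacle here; the only thing to be careful about is organising the rank inequalities so that the hypothesis $\text{rk}(AGA^*)=\text{rk}(G)$ is actually invoked (it forces the sandwiched quantity $\text{rk}(AG)$ to equal $\text{rk}(G)$), rather than proving something vacuous. Note also that the hypothesis $\ell\le m$ is not strictly needed for the argument as structured, but it reflects the intended application, where $A$ is the $\ell\times m$ combination matrix and $G=G_\Phi(\w)$.
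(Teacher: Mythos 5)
Your proof is correct and rests on the same idea as the paper's: the rank chain $\text{rk}(AGA^*)\le \text{rk}(AG)\le \text{rk}(G)$ forces $\text{rk}(AG)=\text{rk}(G)$ under the hypothesis, which together with the trivial inclusion $Ker(G)\subseteq Ker(AG)$ and rank--nullity gives equality of the kernels (the paper phrases this as a proof by contradiction, and your injectivity-of-$A$-on-$Im(G)$ variant is just a repackaging of the same rank computation).
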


\begin{proof}
Clearly $Ker(G)\subseteq Ker(AG)$. Suppose that $\dim Ker(G)<\dim Ker(AG)$. Then $\text{rk}(AG)<\text{rk}(G)$. Now,
$\text{rk}(AGA^*)\leq \min\{\text{rk}(AG), \text{rk}(A)\}\leq \text{rk}(AG)<\text{rk}(G)$ which is a contradiction. Thus, we must have $\dim Ker(G)=\dim Ker(AG)$.
\end{proof}
Now, from Theorem \ref{prop-jorge} and Lemma \ref{lema-rango-2} we obtain:
\begin{proposition}\label{prop-jorge-rectangular}
Let $G$ be a  positive-semidefinite matrix in $\C^{m\times m}$ such  that  $G=G^*$ and $A=\{a_{ij}\}_{i,j}\in\C^{\ell\times m}$  with $\ell\le m$ such that $\text{rk}(G)=\text{rk}(AGA^*)$.
Then,
$$\sigma(A)^2\lambda_-(G) \s[Ker(A), Im(G)]^2\leq \lambda_-(AGA^*)\le \|A\|^2\|G\| \s[Ker(A), Im(G)].$$
\end{proposition}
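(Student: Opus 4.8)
The plan is to reduce Proposition~\ref{prop-jorge-rectangular} to the square-matrix statement of Theorem~\ref{prop-jorge} by symmetrizing the rectangular matrix $A$. First I would write $A=\begin{pmatrix}A\\ 0\end{pmatrix}$ padded with zero rows to form a square matrix $\widetilde{A}\in\C^{m\times m}$; since $G$ is already $m\times m$ this is the natural way to bring both factors into $\C^{m\times m}$. One checks immediately that $\sigma(\widetilde{A})=\sigma(A)$, $\|\widetilde{A}\|=\|A\|$, $Ker(\widetilde A)=Ker(A)$, and $\widetilde A G\widetilde A^{*}=\begin{pmatrix}AGA^{*}&0\\ 0&0\end{pmatrix}$, so that the non-zero eigenvalues of $\widetilde AG\widetilde A^{*}$ are exactly those of $AGA^{*}$; in particular $\lambda_-(\widetilde A G\widetilde A^{*})=\lambda_-(AGA^{*})$. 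Thus it is enough to prove the bounds for the square case, and from now on I may assume $\ell=m$ and $A$ square.

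Next, because $G=G^{*}$ is positive semidefinite, it has a positive-semidefinite square root $G^{1/2}$ with $G=G^{1/2}G^{1/2}$, $Im(G^{1/2})=Im(G)$, $\|G^{1/2}\|=\|G\|^{1/2}$ and $\sigma(G^{1/2})=\lambda_-(G)^{1/2}$. Write $AGA^{*}=(AG^{1/2})(AG^{1/2})^{*}$, so that the non-zero eigenvalues of $AGA^{*}$ are the squares of the non-zero singular values of $AG^{1/2}$; hence $\lambda_-(AGA^{*})=\sigma(AG^{1/2})^{2}$. Now apply Theorem~\ref{prop-jorge} to the pair $(A,G^{1/2})$: it gives
\[
\sigma(A)\,\sigma(G^{1/2})\,\s[Ker(A),Im(G^{1/2})]\le \sigma(AG^{1/2})\le \|A\|\,\|G^{1/2}\|\,\s[Ker(A),Im(G^{1/2})].
\]
Since $Im(G^{1/2})=Im(G)$, squaring the left inequality yields $\sigma(A)^{2}\lambda_-(G)\,\s[Ker(A),Im(G)]^{2}\le\lambda_-(AGA^{*})$, which is the desired lower bound. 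Squaring the right inequality gives $\lambda_-(AGA^{*})\le\|A\|^{2}\|G\|\,\s[Ker(A),Im(G)]^{2}$, and since $\s[\cdot,\cdot]\le 1$ we have $\s[Ker(A),Im(G)]^{2}\le\s[Ker(A),Im(G)]$, which produces exactly the stated (slightly weaker) upper bound.

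The one place where I have to be careful — and where the hypothesis $\text{rk}(G)=\text{rk}(AGA^{*})$ is used — is checking that Theorem~\ref{prop-jorge} genuinely applies and that the Friedrichs angle in the conclusion is the one appearing in Theorem~\ref{prop-jorge}, namely $\s[Ker(A),Im(AG^{1/2}\cdot)]$ versus $\s[Ker(A),Im(G)]$ — I should double-check the precise form in Remark 2.10 of \cite{ACRS05}; if the second subspace there is $Im(B)$ with $B=G^{1/2}$ there is nothing to do, but if it is phrased via $Ker(AB)=Ker(B)$ one needs Lemma~\ref{lema-rango-2}. Indeed, the rank hypothesis guarantees via Lemma~\ref{lema-rango-2} that $Ker(AG)=Ker(G)$, and the analogous computation $\text{rk}(AG^{1/2}(AG^{1/2})^{*})=\text{rk}(AGA^{*})=\text{rk}(G)=\text{rk}(G^{1/2})$ lets one invoke Lemma~\ref{lema-rango-2} with $G^{1/2}$ in place of $G$ to get $Ker(AG^{1/2})=Ker(G^{1/2})$; this is what ensures no spurious cancellation occurs and that the angle $\s[Ker(A),Im(G)]$ is the correct quantity governing $\sigma(AG^{1/2})$. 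So the main (mild) obstacle is the bookkeeping around singular values of products versus eigenvalues of the symmetrized product, together with verifying that the rank condition survives after passing to square roots; the rest is the routine padding and square-root reductions described above.
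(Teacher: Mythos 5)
Your proof is correct, but it takes a genuinely different route from the paper's at the key step. Both arguments begin with the same zero-padding of $A$ to a square matrix $\widetilde{A}$ so that $\lambda_-(AGA^*)=\lambda_-(\widetilde{A}G\widetilde{A}^*)$, $\sigma(\widetilde{A})=\sigma(A)$, $\|\widetilde{A}\|=\|A\|$ and $Ker(\widetilde{A})=Ker(A)$. From there the paper applies Theorem~\ref{prop-jorge} \emph{twice}, with the grouping $(\widetilde{A}G)\cdot\widetilde{A}^*$ and then $\widetilde{A}\cdot G$; this is where the hypothesis $\text{rk}(AGA^*)=\text{rk}(G)$ enters, via Lemma~\ref{lema-rango-2} to replace $Ker(\widetilde{A}G)$ by $Ker(G)$, and it also needs the symmetry $\s[Ker(G),Im(\widetilde{A}^*)]=\s[Ker(\widetilde{A}),Im(G)]$ (using $Ker(G)=Im(G)^{\perp}$ and $\s[M,N]=\s[M^{\perp},N^{\perp}]$). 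You instead factor $G=G^{1/2}G^{1/2}$, use $\lambda_-(AGA^*)=\sigma(AG^{1/2})^2$, and apply Theorem~\ref{prop-jorge} \emph{once} to the pair $(\widetilde{A},G^{1/2})$, with $Im(G^{1/2})=Im(G)$, $\sigma(G^{1/2})=\lambda_-(G)^{1/2}$, $\|G^{1/2}\|=\|G\|^{1/2}$; squaring gives the lower bound exactly and an upper bound with $\s[Ker(A),Im(G)]^2$, which is even slightly sharper than the stated one since $\s\le 1$. Two remarks on your hedging paragraph: with Theorem~\ref{prop-jorge} taken exactly as stated in the paper (second subspace $Im(B)$, $B=G^{1/2}$), your argument never actually uses the rank hypothesis or Lemma~\ref{lema-rango-2} — that hypothesis is forced only by the paper's choice of grouping; and your fallback invocation of Lemma~\ref{lema-rango-2} for $G^{1/2}$ is not a verbatim application of the lemma (it would require $\text{rk}(AG^{1/2}A^*)=\text{rk}(G^{1/2})$), but your rank chain $\text{rk}(AG^{1/2})=\text{rk}(AGA^*)=\text{rk}(G)=\text{rk}(G^{1/2})$ reproduces the lemma's proof and does give $Ker(AG^{1/2})=Ker(G^{1/2})$, so the hedge is harmless. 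Net effect: your route is a correct, somewhat cleaner alternative — one application of the Antezana et al.\ inequality instead of two, no angle-symmetry step — at the (cosmetic) cost of introducing the square root $G^{1/2}$.
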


\begin{proof}
Let $\widetilde A\in \C^{m\times m}$ be the matrix defined by
$\widetilde{a}_{ij}=
\begin{cases}
a_{ij}& \textrm{ if } 1\leq i\leq \ell\\
0&  \textrm{ if } \ell< i\leq m\\
\end{cases}$. Then,
$$\widetilde{A}G(\widetilde{A})^*=
\Big(
\begin{array}{c|c}
AGA^* & 0\\ \hline
0 & 0
\end{array}
\Big),
$$
and therefore $ \lambda_-(AGA^*)=\lambda_-(\widetilde{A}G\widetilde{A}^*)$.

Now, using Theorem \ref{prop-jorge} and Lemma \ref{lema-rango-2}, we have
\begin{align*}
\lambda_-(\widetilde{A}G\widetilde{A}^*)&\geq
\sigma(\widetilde{A}G)\sigma(\widetilde{A}^*)\s[Ker(\widetilde{A}G), Im(\widetilde{A}^*)]\\
&=\sigma(\widetilde{A}G)\sigma(\widetilde{A}^*)\s[Ker(G), Im(\widetilde{A}^*)]\\
&\geq \sigma(\widetilde{A})\lambda_-(G) \s[Ker(\widetilde{A}), Im(G)]\sigma(\widetilde{A}^*)\s[Ker(G), Im(\widetilde{A}^*)],
\end{align*}
and
$$
\lambda_-(\widetilde{A}G\widetilde{A}^*)\leq
\|\widetilde{A}G\|\|\widetilde{A}^*\| \s[Ker(\widetilde{A}G), Im(\widetilde{A}^*)]
\le \|\widetilde{A}\|^2\|G\| \s[Ker(G), Im(\widetilde{A}^*)].
$$

By the properties of the sine of the Friedrichs angle it can be seen that $\s[Ker(G), Im(\widetilde{A}^*)]= \s[Ker(\widetilde{A}), Im(G)].$
Using that $\sigma(A)=\sigma(\widetilde{A})=\sigma(\widetilde{A}^*)$  and $\|\widetilde{A}\|=\|A\|,$ we finally obtain
$$
\sigma(A)^2\lambda_-(G)\s[Ker(\widetilde{A}), Im(G)]^2\le \lambda_-(\widetilde{A}G\widetilde{A}^*)\le
\|A\|^2\|G\| \s[Ker(\widetilde{A}), Im(G)].
$$

We finish the proof by observing that $Ker(A)=Ker(\widetilde{A})$.
\end{proof}

The last result of this section gives an equivalent condition for $\text{rk}(AGA^*)=\text{rk}(G)$  to hold, and we
will use it in the next section.

\begin{lemma}\label{lemma-Radu}
Let $A\in \C^{\ell\times m}$ with $\ell\le m$ and $G\in \C^{m\times m}$ such that $G$ is positive-semidefinite and $G=G^*$.
Then, $\text{rk}(AGA^*)=\text{rk}(G)$ if and only if $Ker(A)\cap Im(G)=\{0\}$.
\end{lemma}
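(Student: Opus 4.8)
The plan is to prove the equivalence by analyzing ranks carefully, using the positive-semidefiniteness of $G$ so that $G$ factors through its own column space. First I would observe that since $G = G^*$ is positive-semidefinite, we may write $G = C C^*$ for some $C \in \C^{m\times r}$ with $r = \text{rk}(G)$ and $\text{rk}(C) = r$; moreover $Im(G) = Im(C)$ and $Ker(G) = Ker(C^*) = Ker(G^*)$. Then $AGA^* = (AC)(AC)^*$, so $\text{rk}(AGA^*) = \text{rk}(AC)$, while $\text{rk}(G) = \text{rk}(C) = r$. Thus the condition $\text{rk}(AGA^*) = \text{rk}(G)$ becomes exactly $\text{rk}(AC) = \text{rk}(C) = r$, i.e. $A$ restricted to the column space $Im(C) = Im(G)$ is injective.

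Next I would translate ``$A|_{Im(G)}$ is injective'' into the stated condition. By rank-nullity applied to the linear map $A : \C^m \to \C^\ell$ restricted to the subspace $Im(G)$, one has $\text{rk}(AC) = \dim A(Im(G)) = \dim Im(G) - \dim\bigl(Ker(A) \cap Im(G)\bigr)$. Since $\dim Im(G) = r = \text{rk}(G)$, the equality $\text{rk}(AGA^*) = \text{rk}(G)$ holds if and only if $\dim\bigl(Ker(A) \cap Im(G)\bigr) = 0$, i.e. $Ker(A) \cap Im(G) = \{0\}$, which is exactly the claim.

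For the reverse implication, which is also covered by the above since every step is an equivalence, one just reads the chain backwards: if $Ker(A)\cap Im(G) = \{0\}$ then $AC$ has trivial kernel on a space of dimension $r$, hence $\text{rk}(AC) = r$, hence $\text{rk}(AGA^*) = \text{rk}\bigl((AC)(AC)^*\bigr) = \text{rk}(AC) = r = \text{rk}(G)$. I expect the only mild subtlety — and the one place to be careful — to be justifying that $\text{rk}(BB^*) = \text{rk}(B)$ for a complex rectangular matrix $B$ (here with $B = AC$), which follows because $Ker(BB^*) = Ker(B^*)$ over $\C$ (if $BB^*x = 0$ then $\|B^*x\|^2 = \langle BB^*x, x\rangle = 0$), giving $\text{rk}(BB^*) = \text{rk}(B^*) = \text{rk}(B)$. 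An alternative, essentially equivalent, route that avoids introducing $C$ is to use Lemma \ref{lema-rango-2}: the forward direction gives $Ker(AG) = Ker(G)$, from which $Ker(A) \cap Im(G) = \{0\}$ follows because $Im(G)$ is a complement of $Ker(G)$ (again using $G = G^*$ positive-semidefinite, so $\C^m = Ker(G) \oplus Im(G)$) and any nonzero $v \in Ker(A)\cap Im(G)$ would lie in $Ker(AG)\setminus Ker(G)$ after pulling back through the isomorphism $G|_{Im(G)}$; conversely $Ker(A)\cap Im(G) = \{0\}$ forces $\text{rk}(AG) = \text{rk}(G)$ and then, since $Im(AG) \subseteq Im(A)$ while $AGA^*$ and $AG$ have the same rank by the symmetric argument, one recovers $\text{rk}(AGA^*) = \text{rk}(G)$. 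Either way the content is the decomposition $\C^m = Ker(G)\oplus Im(G)$ together with $\text{rk}(BB^*) = \text{rk}(B)$; I would present the factorization-through-$C$ version as it is the cleanest.
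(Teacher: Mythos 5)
Your proof is correct and follows essentially the same route as the paper: the paper factors $G=G^{1/2}G^{1/2}$ and reduces via $\text{rk}(AGA^*)=\text{rk}(AG^{1/2})$ to comparing $Ker(AG^{1/2})$ with $Ker(G^{1/2})$, which is exactly your argument with the full-column-rank factor $C$ (satisfying $G=CC^*$) in place of $G^{1/2}$ and with rank--nullity on $A|_{Im(G)}$ replacing the kernel comparison. The only difference is cosmetic: your choice of $C$ makes the final equivalence a one-line dimension count, where the paper leaves the analogous step ($Ker(AG^{1/2})=Ker(G^{1/2})$ iff $Ker(A)\cap Im(G)=\{0\}$) as ``easy to check.''
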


\begin{proof}
Note that, since $G$ is positive semidefinite, and $G=G^*,$ it is always true that  $\text{rk}(AGA^*)=\text{rk}(AG^{1/2}G^{1/2}A^*)= \text{rk}(AG^{1/2})$. Then, $\text{rk}(AGA^*)=\text{rk}(G)$ if and only if
$\dim(Ker(AG^{1/2}))=\dim(Ker(G))$. Thus, since $Im(G)=Im(G^{1/2})$ and then $Ker(G)=Ker(G^{1/2})$, we want to prove that
$\dim(Ker(AG^{1/2}))=\dim(Ker(G^{1/2})$ if and only if $Ker(A)\cap Im(G^{1/2})=\{0\}$.
Now, using that $Ker(G^{1/2})\subseteq Ker(AG^{1/2})$, we have $\dim(Ker(AG^{1/2}))=\dim(Ker(G^{1/2})$ if and only if
$Ker(AG^{1/2})=Ker(G^{1/2})$.

Finally, it is easy to check that the condition $Ker(AG^{1/2})=Ker(G^{1/2})$ is equivalent to
$Ker(A)\cap Im(G)=\{0\}$.
\end{proof}


\section{Main results}\label{results}

As we have mentioned in the introduction, there are examples of  sets of generators  that do not
contain any {\it minimal} subset of generators. For instance,
consider the shift invariant space $V=S(\phi)$ where $\phi$ is such that
$ \widehat{\phi}=\chi_{[0,1]}$. (Here  we denote by $\chi_M$ the characteristic
function of a set $M$). It can be seen that $V=S(\phi_1, \phi_2)$ with $\phi_1, \phi_2$
such that $\widehat{\phi_1}=\chi_ {[0,\frac1{2}]}$ and $\widehat {\phi_2}=\chi_ {[\frac1{2}, 1]}$.
However, neither $\phi_1$ nor $\phi_2$ generates $V$ by itself.

An alternative to overcome this problem will be  to try to obtain  a smaller set of generators, for example a minimal one,  considering  instead  linear
combinations of the original generators. Related to this,  Bownik and
Kaiblinger (\cite{BK06}) proved the following:

\begin{theorem}\cite[Theorem 1]{BK06}\label{thm-bownik}
 Let $V$ be a finitely generated SIS  with length $\ell(V)$, and $\Phi=\{\phi_1,\cdots, \phi_m\}\subseteq L^2(\R^d)$ where  $\ell(V)\leq m$ and such that $V=S(\Phi)$.
For every $\ell(V)\leq \ell\leq m$, consider the set of matrices
$\mathcal{R}=\{A\in\C^{\ell\times m}\,:\, V=S(\Psi), \Psi=A\Phi \}.$ Then,
$\C^{\ell\times m}\setminus \mathcal{R}$ has zero Lebesgue measure.
\end{theorem}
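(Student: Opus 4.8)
The plan is to recast the condition $A\in\mathcal{R}$ as a \emph{generic} rank condition on $A$, and then to decouple the two variables $A$ and $\w$ by a Tonelli argument.

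\emph{Step 1: reduction to a pointwise rank identity.} Since every $\psi_i=\sum_{j}a_{ij}\phi_j$ belongs to $V=S(\Phi)$ and $V$ is shift invariant, one always has $S(\Psi)\subseteq V$ for $\Psi=A\Phi$. Passing to the range-function (fiber space) description of shift invariant spaces, the fiber space of $S(\Psi)$ at $\w$ is contained in that of $V$ for a.e.\ $\w$, and two shift invariant spaces coincide exactly when their fiber spaces agree a.e.; hence $S(\Psi)=V$ if and only if these two fiber spaces have the same dimension for a.e.\ $\w$. Because the dimension of the fiber space of $S(\Phi)$ at $\w$ equals $\text{rk}(G_\Phi(\w))$ (the pointwise form underlying \eqref{length-gramian}, see \cite{Bow00}) and, by Proposition \ref{gramiano-A}, $G_\Psi(\w)=AG_\Phi(\w)A^*$, we obtain
\[
A\in\mathcal{R}\iff \text{rk}\big(AG_\Phi(\w)A^*\big)=\text{rk}\big(G_\Phi(\w)\big)\quad\text{for a.e. }\w\in[-\tfrac12,\tfrac12]^d .
\]

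\emph{Step 2: for each fixed $\w$ the bad set of matrices is null.} Fix $\w$, write $G:=G_\Phi(\w)$ and $r:=\text{rk}(G)\le\ell(V)\le\ell$, and consider $\mathcal B_\w:=\{A\in\C^{\ell\times m}:\text{rk}(AGA^*)<r\}$. By Lemma \ref{lemma-Radu}, $A\in\mathcal B_\w$ if and only if $Ker(A)\cap Im(G)\neq\{0\}$. Choosing $M\in\C^{m\times r}$ whose columns form a basis of $Im(G)$, this is equivalent to $\text{rk}(AM)<r$, i.e.\ to the simultaneous vanishing of all $r\times r$ minors of $AM\in\C^{\ell\times r}$. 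Each such minor is a polynomial in the entries of $A$, and since $M$ has full column rank $r\le\ell$ there exists $A$ with $\text{rk}(AM)=r$; hence at least one of these minor polynomials is not identically zero. Therefore $\mathcal B_\w$ lies in the zero set of a nonzero polynomial and is Lebesgue-null in $\C^{\ell\times m}$ (the case $r=0$ being trivial).

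\emph{Step 3: slicing.} Set $\mathcal B:=\{(A,\w):\text{rk}(AG_\Phi(\w)A^*)<\text{rk}(G_\Phi(\w))\}\subseteq\C^{\ell\times m}\times[-\tfrac12,\tfrac12]^d$. This set is measurable: on each of the measurable sets $\{\w:\text{rk}(G_\Phi(\w))=r\}$ the defining condition is the vanishing of finitely many functions jointly continuous in $(A,\w)$. By Step 2 every vertical slice $\mathcal B_\w$ is null, so Tonelli's theorem gives $\int|\mathcal B_A|\,dA=\int|\mathcal B_\w|\,d\w=0$, where $\mathcal B_A=\{\w:(A,\w)\in\mathcal B\}$. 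Thus $|\mathcal B_A|=0$ for a.e.\ $A$, which by Step 1 means $A\in\mathcal R$ for a.e.\ $A$; equivalently, $\C^{\ell\times m}\setminus\mathcal R$ is Lebesgue-null.

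I expect Step 1 to be the main obstacle: turning the assertion ``$S(A\Phi)=V$'' into the pointwise rank identity requires the range-function characterization (a.e.\ equality of \emph{fibers}, not merely of lengths), and the Tonelli step needs the joint measurability of $\mathcal B$. The minor nonvanishing in Step 2 and the slicing in Step 3 are then routine.
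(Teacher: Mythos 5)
Your argument is correct, and it is worth noting that the paper itself offers no proof of this statement: Theorem \ref{thm-bownik} is quoted from \cite{BK06}, and the only trace of its proof in the paper is the observation that $\mathcal{R}$ admits the Gramian description \eqref{pinta-R}. Your Step 1 is exactly that description, derived correctly from the range-function machinery (the fibers of $\psi_i$ are the $A$-linear combinations of the fibers of $\phi_j$, so $J_{S(\Psi)}(\w)\subseteq J_V(\w)$ a.e., and equality of these finite-dimensional fiber spaces a.e.\ is equivalent to $\text{rk}(AG_{\Phi}(\w)A^*)=\text{rk}(G_{\Phi}(\w))$ a.e., using $\dim J_{S(\Phi)}(\w)=\text{rk}(G_{\Phi}(\w))$ and Proposition \ref{gramiano-A}). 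Steps 2 and 3 are also sound: for a.e.\ fixed $\w$ one has $\text{rk}(G_{\Phi}(\w))\le\ell(V)\le\ell$ by \eqref{length-gramian}, Lemma \ref{lemma-Radu} converts the rank drop into $Ker(A)\cap Im(G_{\Phi}(\w))\neq\{0\}$, i.e.\ the vanishing of all $r\times r$ minors of $AM$, and since some matrix $A$ (e.g.\ one whose first $r$ rows are $M^*$) realizes full rank, the bad set at that $\w$ lies in the zero set of a nonzero polynomial, hence is null; the exceptional null set of $\w$ where the rank exceeds $\ell$ is harmless in the Tonelli integration, and the joint measurability you invoke (minors are continuous in $A$, measurable in $\w$) justifies the slicing. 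This fiberize--pointwise-genericity--Fubini scheme is in the same spirit as the original argument of Bownik and Kaiblinger, so your proposal is essentially a self-contained reconstruction of the cited proof rather than a divergence from the paper, which proves nothing here beyond recording \eqref{pinta-R}.
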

This result briefly says that given any set of generators of a finitely generated SIS, almost every matrix (of the right size) transforms it in a new set of
generators. In particular, a {\it minimal } set of generators can be obtained with this procedure.

An interesting question arises here. When is the set of generators obtained, a set of {\it frame} generators? (i.e. the integer translations form a frame of the SIS?)
More precisely we want to obtain new sets of generators of the form $\Psi=A\Phi$ with the additional property of being a frame.

First, we show that in order to  $E(\Psi)$ to  be a frame, $E(\Phi)$ needs  to be a frame. More precisely we prove:

\begin{proposition}\label{frametoframe}
Let $ \Phi=\{\phi_1,\cdots, \phi_m\}\subseteq L^2(\R^d)$ be a generator set for a SIS $V$ of length $\ell(V)\leq m$  and suppose that $E(\Phi)$ is a Bessel sequence but not a frame for $V$. Then, for each matrix  $A\in\C^{\ell\times m}$, with $\ell(V)\leq \ell\leq m$, $E(\Psi)$ is not a frame for $V$ where $\Psi=A\Phi$.
\end{proposition}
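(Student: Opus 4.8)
The plan is to use the Gramian characterizations from Proposition~\ref{gramiano-frame} together with the conjugation formula of Proposition~\ref{gramiano-A}. Write $G=G_{\Phi}$ and $G_{\Psi}(\w)=AG(\w)A^*$. The hypothesis that $E(\Phi)$ is Bessel but not a frame means, by parts $(1)$ and $(3)$ of Proposition~\ref{gramiano-frame}, that $\esssup_{\w}\|G(\w)\|\le\beta<\infty$ while for \emph{no} $\alpha>0$ does one have $\Sigma(G(\w))\subseteq[\alpha,\beta]\cup\{0\}$ for a.e.\ $\w$. Equivalently, $\operatorname*{ess\,inf}_{\w}\lambda_-(G(\w))=0$, where $\lambda_-$ denotes the smallest nonzero eigenvalue (with $\lambda_-(0):=+\infty$ by convention, so that this infimum genuinely detects eigenvalues approaching $0$ from above on a set of positive measure). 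So first I would restate the failure of the frame property as: there is a sequence of measurable sets $\Omega_n\subseteq[-\tfrac12,\tfrac12]^d$ of positive measure on which $0<\lambda_-(G(\w))<\tfrac1n$.

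Next I would show this same behaviour is inherited by $G_{\Psi}(\w)=AG(\w)A^*$, which forces $E(\Psi)$ not to be a frame via part $(3)$ of Proposition~\ref{gramiano-frame}. Here there are two cases according to the rank of $AG(\w)A^*$. If on a positive-measure subset of $\Omega_n$ we have $\operatorname{rk}(AG(\w)A^*)<\operatorname{rk}(G(\w))$, then $E(\Psi)$ cannot generate $V$ (its Gramian rank drops below $\ell(V)$ on a positive-measure set, contradicting \eqref{length-gramian}), so in particular it is not a frame for $V$, and we are done. Otherwise, $\operatorname{rk}(AG(\w)A^*)=\operatorname{rk}(G(\w))$ a.e.\ on $\Omega_n$, and then Proposition~\ref{prop-jorge-rectangular} applies and gives
\[
\lambda_-(AG(\w)A^*)\le \|A\|^2\|G(\w)\|\,\s[Ker(A), Im(G(\w))]\le \|A\|^2\beta.
\]
This bound alone is not enough; I need $\lambda_-(AG(\w)A^*)$ to be \emph{small}, not just bounded. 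For that I would instead use the lower estimate in Proposition~\ref{prop-jorge-rectangular} in reverse is not what we want either — rather, the cleaner route is to bound $\lambda_-(AG(\w)A^*)$ directly: since $\lambda_-(AG(\w)A^*)$ is the minimum of $\langle AG(\w)A^*x,x\rangle=\langle G(\w)A^*x,A^*x\rangle$ over unit vectors $x\perp Ker(AG(\w)A^*)$, and picking $x$ with $A^*x$ having a large component along an eigenvector of $G(\w)$ with eigenvalue $<\tfrac1n$, one gets $\lambda_-(AG(\w)A^*)\le C/n$ for a constant $C$ depending only on $A$. Thus $\operatorname*{ess\,inf}_{\w}\lambda_-(G_{\Psi}(\w))=0$, so $E(\Psi)$ is not a frame.

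The main obstacle is the Rayleigh-quotient step in the second case: one must produce, for $\w\in\Omega_n$, a unit vector $x$ orthogonal to $Ker(AG(\w)A^*)$ whose image $A^*x$ aligns with a low-eigenvalue eigenvector $v$ of $G(\w)$ well enough to make $\langle G(\w)A^*x,A^*x\rangle$ small. The subtlety is that $v$ need not lie in $Im(A^*)$, and $A^*x$ has a bounded but possibly not-from-below-controlled norm; one handles this by working on the subspace where $\operatorname{rk}$ is preserved — by Lemma~\ref{lema-rango-2}, $Ker(AG(\w))=Ker(G(\w))$ there, so $A$ restricts to an isomorphism from $Im(G(\w))$ onto $Im(AG(\w))$ — which lets one transfer the small-eigenvalue direction of $G(\w)$ through $A$ with the rank hypothesis guaranteeing no collapse, at the cost of the fixed constant $\|A\|^2$. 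Alternatively, and perhaps more transparently, one can avoid case analysis entirely by invoking the upper bound of Proposition~\ref{prop-jorge-rectangular} applied not to $G$ but to a perturbation, or simply note that $\Sigma(AG(\w)A^*)\subseteq\Sigma(G(\w))\cup\{0\}$ fails in general — so the Rayleigh-quotient argument seems unavoidable and is where I expect to spend the real effort; everything else is bookkeeping with Proposition~\ref{gramiano-frame}.
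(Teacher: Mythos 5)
Your route is genuinely different from the paper's. The paper's proof never touches the Gramian: since the lower frame bound fails for $E(\Phi)$, there are unit vectors $f_n\in V$ with $t_n(\Phi)=\sum_{j,k}|\langle f_n,T_k\phi_j\rangle|^2\to 0$, and a single Cauchy--Schwarz estimate gives $t_n(\Psi)\le\bigl(\sum_{i,j}|a_{ij}|^2\bigr)\,t_n(\Phi)\to 0$, so the very same vectors witness the failure of the lower frame bound for $E(\Psi)$ (the case $S(\Psi)\neq V$ being trivial). That argument is a few lines, with no case analysis, no rank hypotheses and no eigenvalue estimates; your fiberwise eigenvalue approach can be made to work, but it is considerably heavier.

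Two steps in your sketch need repair before it is a proof. First, in the rank-drop case your parenthetical justification is wrong: \eqref{length-gramian} only says that $\ell(V)$ is the essential supremum of $\text{rk}(G_{\Psi}(\w))$, so having rank below $\ell(V)$ on a set of positive measure contradicts nothing. What you should invoke is the description \eqref{pinta-R} of $\mathcal{R}$ (equivalently the fiber-span argument behind it): $S(\Psi)=V$ if and only if $\text{rk}(AG_{\Phi}(\w)A^*)=\text{rk}(G_{\Phi}(\w))$ a.e., so a rank drop on a positive-measure set already forces $S(\Psi)\subsetneq V$. Second, the key inequality you leave open is true, but the plan of choosing $x$ with $A^*x$ aligned with the small eigenvector $v$ does not go through as stated ($v$ need not be near $Im(A^*)$, as you yourself note). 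The clean fix avoids $A^*x$ altogether: the nonzero eigenvalues of $AG_{\Phi}(\w)A^*=(AG_{\Phi}(\w)^{1/2})(AG_{\Phi}(\w)^{1/2})^*$ are the squares of the nonzero singular values of $AG_{\Phi}(\w)^{1/2}$; by Lemma \ref{lemma-Radu}, rank preservation gives $Ker(AG_{\Phi}(\w)^{1/2})=Ker(G_{\Phi}(\w)^{1/2})=Ker(G_{\Phi}(\w))$, so the unit eigenvector $v$ attached to $\lambda_-(G_{\Phi}(\w))$ is admissible in the minimization defining $\sigma(AG_{\Phi}(\w)^{1/2})$, and $\|AG_{\Phi}(\w)^{1/2}v\|^2=\lambda_-(G_{\Phi}(\w))\|Av\|^2\le\|A\|^2\lambda_-(G_{\Phi}(\w))$. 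Hence $\lambda_-(G_{\Psi}(\w))\le\|A\|^2\lambda_-(G_{\Phi}(\w))<\|A\|^2/n$ on $\Omega_n$, and item (3) of Proposition \ref{gramiano-frame} rules out the frame property. With these two repairs your argument is correct; it is simply much longer than the definition-level proof the paper gives.
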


After this result,  the right question will be which matrices (of the right size) map frame generator sets into {\it new} frame generator sets?
The answer of this question is not as direct as is the case of a plain set of generators and it will take us the rest of the section.
Let us first start with the proof of Proposition \ref{frametoframe}

\begin{proof}[Proof of Proposition\ \ref{frametoframe}]
Let $A\in\C^{\ell\times m}$. If $\Psi=A\Phi$ is not a generator set for $V$, then $\Psi$ it is not a frame for $V$.
Thus, suppose that $\Psi$ is a set of generators for $V$.

Since $E(\Phi)$ is a Bessel sequence but not a frame for $V$, the lower frame inequality in (\ref{eq-frame}) is not satisfied. Therefore, there exists $\{f_n\}_{n\in\N}\subseteq V$ such that
$$t_n(\Phi):=\sum_{j=1}^{m}\sum_{k\in\Z^d}|
\langle f_n, T_k\phi_j\rangle|^2\rightarrow 0, \textrm{ when } n\to+\infty.$$
Now,
\begin{align*}
t_n(\Psi)&=\sum_{i=1}^{\ell}\sum_{k\in\Z^d}|
\langle f_n, T_k\psi_i\rangle|^2
=\sum_{i=1}^{\ell}\sum_{k\in\Z^d}|\sum_{j=1}^{m}\overline{a_{ij}}
\langle f_n, T_k\phi_j\rangle|^2\\
&\leq \sum_{i=1}^{\ell}\sum_{k\in\Z^d}
\left(\sum_{j=1}^{m}|a_{ij}|^2\right)\left(\sum_{j=1}^{m}
|\langle f_n, T_k\phi_j\rangle|^2\right)\\
&=\left(\sum_{i=1}^{\ell}\sum_{j=1}^{m}|a_{ij}|^2\right) t_n(\Phi).
\end{align*}
Then, $t_n(\Psi)\rightarrow 0, \textrm{ when } n\to+\infty$ and thus, $E(\Psi)$ does not satisfy the lower frame inequality.
\end{proof}

When $E(\Phi)$ is not a Bessel sequence it can happen that for certain matrix $A,$ $E(A\Phi)$ is a Bessel sequence. To construct
an easy example take $\phi\in L^2(\R)$ such that $E(\phi)$ is a frame for $S(\phi).$ Now, choose a second generator $\widetilde{\phi}\in S(\phi)$
such that  $E(\widetilde{\phi})$ is not a Bessel sequence. Thus, if $\Phi= \{\phi, \widetilde{\phi}\},$ $S(\Phi)= S(\phi)$ and $E(\{\phi, \widetilde{\phi}\})$ is
not a Bessel sequence. However, taking $A= (1, 0),$ we get that $E(A\Phi)$ is a Bessel sequence. The hypothesis in the above proposition of $E(\Phi)$ being a Bessel sequence it is not very restrictive and greatly simplifies the treatment.

We now point out a property about the result of Bownik and  Kaiblinger that will be important in what follows.

From the proof of Theorem \ref{thm-bownik}, it follows that the set $\mathcal{R}$ can be described in terms of the Gramian as
\begin{equation}\label{pinta-R}
\mathcal{R}=\{A\in\C^{\ell\times m}\,:\, \text{rk}(G_{\Phi}(\w))=\text{rk}(AG_{\Phi}(\w)A^*)
\textrm{  for a.e. } \w\in [-\frac1{2}, \frac1{2}]^d   \},
\end{equation}
where $\ell$ is a number between  length of $S(\Phi)$ and $m.$ Now, using Lemma \ref{lemma-Radu}
$$\mathcal{R}=\{A\in\C^{\ell\times m}\,:\, Ker(A)\cap Im(G_{\Phi}(\w))=\{0\}
\textrm{  for a.e. } \w\in [-\frac1{2}, \frac1{2}]^d   \}.$$

\begin{remark}
 When $\ell$ is exactly the length of $S(\Phi)$, note that if $A\in\mathcal{R}$, by $(\ref{length-gramian})$, $\ell=\text{rk}(A)$ and then $A$ is full rank.
\end{remark}

As we have already discussed, a key point  in our problem is the behavior of the eigenvalues of conjugated matrices.
Here one wants to get a smaller
set of generators from a given large set of generators. In terms of matrices, this translates
in conjugating the Gramian by rectangular matrices.
The behavior of the eigenvalues in this case is not very well understood.
However we are able to  exactly determine those matrices that yield frames, in terms of the Friedrichs angle,
and using recent results by Antezana et al.\cite{ACRS05} on singular values of composition of operators.

The main result of this section is the following theorem:

\begin{theorem}\label{thm-frames}
Let $ \Phi=\{\phi_1,\cdots, \phi_m\}\subseteq L^2(\R^d)$ such that $E(\Phi)$ is a frame for $V=S(\Phi)$ and suppose that $\ell(V)\leq\ell\leq m$.
Let $A\in\C^{\ell\times m}$ be a matrix and consider $\Psi=\{\psi_1,\cdots, \psi_{\ell}\}$ where $\Psi=A\Phi$. Then, $E(\Psi)$ is a
frame for $V$ if and only if $A$ satisfies the following two conditions
\begin{enumerate}
\item $A\in\mathcal{R}$ where $\mathcal{R}$ is as in (\ref{pinta-R}).
\item There exists $\delta>0$ such that $\s\,[Ker(A), Im(G_{\Phi}(\w))]\ge \delta$ for a.e. $\w\in[-\frac1{2}, \frac1{2}]^d.$
\end{enumerate}
\end{theorem}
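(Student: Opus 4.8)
The plan is to use the Gramian characterization of frames in Proposition~\ref{gramiano-frame}(3), combined with Proposition~\ref{gramiano-A} which identifies $G_\Psi(\w)=AG_\Phi(\w)A^*$, and then to control the smallest non-zero eigenvalue of this conjugated matrix uniformly in $\w$ via Proposition~\ref{prop-jorge-rectangular}. Since $E(\Phi)$ is a frame for $V$, by Proposition~\ref{gramiano-frame}(3) there are constants $0<\alpha\le\beta$ with $\Sigma(G_\Phi(\w))\subseteq[\alpha,\beta]\cup\{0\}$ for a.e.\ $\w$; in particular $\|G_\Phi(\w)\|\le\beta$ and $\lambda_-(G_\Phi(\w))\ge\alpha$ for a.e.\ $\w$. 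The upper frame bound for $E(\Psi)$ is automatic: $\|G_\Psi(\w)\|=\|AG_\Phi(\w)A^*\|\le\|A\|^2\beta$, so $E(\Psi)$ is always a Bessel sequence and the only issue is the lower frame bound, i.e.\ a uniform positive lower bound for $\lambda_-(G_\Psi(\w))$.

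For the ``if'' direction I would argue as follows. Assume (1) and (2). Condition (1) says $A\in\mathcal{R}$, i.e.\ $\text{rk}(G_\Phi(\w))=\text{rk}(AG_\Phi(\w)A^*)$ for a.e.\ $\w$, which is exactly the hypothesis needed to invoke Proposition~\ref{prop-jorge-rectangular}; and it also guarantees, by the reformulation of $\mathcal{R}$ via Lemma~\ref{lemma-Radu}, that $\Psi$ is a set of generators for $V$ (so $S(\Psi)=V$, using Theorem~\ref{thm-bownik}'s description (\ref{pinta-R})). Applying Proposition~\ref{prop-jorge-rectangular} with $G=G_\Phi(\w)$ gives
\[
\lambda_-(AG_\Phi(\w)A^*)\ \ge\ \sigma(A)^2\,\lambda_-(G_\Phi(\w))\,\s[Ker(A),Im(G_\Phi(\w))]^2\ \ge\ \sigma(A)^2\,\alpha\,\delta^2
\]
for a.e.\ $\w$, where the last inequality uses $\lambda_-(G_\Phi(\w))\ge\alpha$ and hypothesis (2). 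Note $\sigma(A)>0$ since $A\ne 0$ (if $A=0$ then $\Psi$ does not generate $V$). Combined with the upper bound $\lambda_-(G_\Psi(\w))\le\|G_\Psi(\w)\|\le\|A\|^2\beta$, we conclude $\Sigma(G_\Psi(\w))\subseteq[\sigma(A)^2\alpha\delta^2,\ \|A\|^2\beta]\cup\{0\}$ for a.e.\ $\w$, which by Proposition~\ref{gramiano-frame}(3) means $E(\Psi)$ is a frame for $V$.

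For the ``only if'' direction, suppose $E(\Psi)$ is a frame for $V$. Then $\Psi$ generates $V$, so by the description (\ref{pinta-R}) of $\mathcal{R}$ (and Lemma~\ref{lemma-Radu}) we get $A\in\mathcal{R}$, which is (1). Now by Proposition~\ref{gramiano-frame}(3) there is $\alpha'>0$ with $\lambda_-(G_\Psi(\w))\ge\alpha'$ for a.e.\ $\w$. Since (1) holds, Proposition~\ref{prop-jorge-rectangular} also supplies the upper bound
\[
\lambda_-(AG_\Phi(\w)A^*)\ \le\ \|A\|^2\,\|G_\Phi(\w)\|\,\s[Ker(A),Im(G_\Phi(\w))]\ \le\ \|A\|^2\,\beta\,\s[Ker(A),Im(G_\Phi(\w))],
\]
so $\s[Ker(A),Im(G_\Phi(\w))]\ge \alpha'/(\|A\|^2\beta)=:\delta>0$ for a.e.\ $\w$, which is (2). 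The main obstacle I anticipate is purely bookkeeping: making sure the degenerate/edge cases are handled (when $A=0$, or when $Ker(A)=\{0\}$, or when $Ker(A)\supseteq Im(G_\Phi(\w))$ on a positive-measure set, where the Friedrichs angle conventions $\s=1$ apply), and confirming that the equivalence ``$\Psi$ generates $V$'' $\iff$ ``$A\in\mathcal{R}$'' is legitimately available from Theorem~\ref{thm-bownik} and (\ref{pinta-R}) rather than only ``almost every $A$''. Once those are pinned down, the proof is a clean two-sided application of Proposition~\ref{prop-jorge-rectangular} sandwiching $\Sigma(G_\Psi(\w))$ between explicit constants.
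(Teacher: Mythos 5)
Your proposal is correct and follows essentially the same route as the paper: both directions rest on the identification $G_\Psi(\w)=AG_\Phi(\w)A^*$, the spectral characterization of frames in Proposition~\ref{gramiano-frame}(3), and the two-sided estimate of Proposition~\ref{prop-jorge-rectangular}, with the same choice $\delta=\alpha'/(\|A\|^2\beta)$ in the necessity part and the same inclusion $\Sigma(G_\Psi(\w))\subseteq[\sigma(A)^2\alpha\delta^2,\|A\|^2\beta]\cup\{0\}$ in the sufficiency part. The only difference is your explicit attention to degenerate cases, which the paper handles implicitly.
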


\begin{proof}[Proof of Theorem\ \ref{thm-frames}]
Let $0<\alpha\leq\beta$ be the frame bounds for $E(\Phi)$.

First suppose that $E(\Psi)$ is a frame for $V$ and let $\beta'\ge \alpha'>0$ be its frame bounds. Since, in particular, $\Psi$ is a generator
set for $V,$ $A$ belongs to $\mathcal{R}.$

By Proposition \ref{prop-jorge-rectangular}, we have that
$$
\lambda_{-}(G_{\Psi}(\w))= \lambda_{-}(AG_{\Phi}(\w)A^*)\le \|A\|^2\|G_{\Phi}(\w)\| \,\s[Ker(A), Im(G_{\Phi}(\w))],
$$
for a.e. $\w\in[-\frac1{2}, \frac1{2}]^d.$

Using Proposition \ref{gramiano-frame},
$\alpha'\le\lambda_{-}(G_{\Psi}(\w))$ and $\|G_{\Phi}(\w)\|\leq \beta$. Thus,
$$
\alpha'\le \|A\|^2\beta \, \s[Ker(A), Im(G_{\Phi}(\w))].
$$
Then, item (2) is satisfied taking $\delta= \frac{\alpha'}{\|A\|^2\beta}.$

Conversely. Since $A\in \mathcal{R},$ $V=S(\Psi)$ and $\text{rk}(AG_{\Phi}(\w)A^*)= \text{rk}(G_{\Phi}(\w))$ for a.e. $\w\in[-\frac1{2}, \frac1{2}]^d.$

Now, for a.e. $\w\in[-\frac1{2}, \frac1{2}]^d,$ we apply the lower inequality that Proposition \ref{prop-jorge-rectangular} gives to $G_{\Phi}(\w)$ and $A.$ Then,
$$
\lambda_{-}(AG_{\Phi}(\w)A^*)\ge \sigma(A)^2\lambda_{-}(G_{\Phi}(\w))\,\s[Ker(A), Im(G_{\Phi}(\w))]^2\ge \sigma(A)^2 \alpha \delta^2.
$$

On the other hand, by Proposition \ref{gramiano-frame},
$$
\|G_{\Psi}(\w)\|= \|AG_{\Phi}(\w)A^*\|\le \|A\|^2\|G_{\Phi}(\w)\|\le \|A\|^2\beta
$$
and from this it follows that the eigenvalues of $G_{\Psi}(\w)$ are bounded above by $\|A\|^2\beta.$

Therefore, $\Sigma(G_{\Psi}(\w))\subseteq \big[ \,\sigma(A)^2 \alpha \delta^2, \|A\|^2\beta\,\big] \cup\{0\}$ for a.e. $\w\in[-\frac1{2}, \frac1{2}]^d$
and the result follows from Proposition \ref{gramiano-frame}.
\end{proof}

As a consequence of the above theorem, we have the following result. We impose more restrictive condition on $G_{\Phi}(\w)$ than in Theorem  \ref{thm-frames}.
However the new hypothesis is easy  to check and  avoid the calculation of the sine of the Friedrichs angle.

\begin{corollary}\label{thm-frames-2}
Let $ \Phi=\{\phi_1,\cdots, \phi_m\}\subseteq L^2(\R^d)$ such that $E(\Phi)$ is a frame for $V=S(\Phi)$ and suppose that $\ell(V)\leq\ell\leq m$. Consider $A\in\C^{\ell\times m}$ such that $Ker(A)=
Ker(G_{\Phi}(\w))$ for a.e. $\w\in[-\frac1{2}, \frac1{2}]^d$, and $dim(Ker(A)) = m-\ell$.
If $A\in\mathcal{R}$ where $\mathcal{R}$ is as in (\ref{pinta-R}), and  $\Psi=\{\psi_1,\cdots, \psi_{\ell}\}$ where $\Psi=A\Phi$, then $E(\Psi)$ is a frame for $V$.
\end{corollary}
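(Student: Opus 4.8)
The plan is to reduce everything to Theorem \ref{thm-frames}: it suffices to check that the matrix $A$ satisfies conditions (1) and (2) there. Condition (1), namely $A\in\mathcal{R}$, is assumed outright, so the whole content will be the verification of condition (2): that $\s\,[Ker(A),Im(G_{\Phi}(\w))]$ is bounded below by a positive constant for a.e. $\w\in[-\frac1{2},\frac1{2}]^d$.

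The key observation is that under the present hypotheses this sine is not merely bounded below but is \emph{identically equal to $1$} for a.e. $\w$. First I would use that $G_{\Phi}(\w)$ is self-adjoint for a.e. $\w$ to write $Im(G_{\Phi}(\w))=Ker(G_{\Phi}(\w))^{\perp}$, and then invoke the hypothesis $Ker(A)=Ker(G_{\Phi}(\w))$ to conclude $Im(G_{\Phi}(\w))=Ker(A)^{\perp}$ for a.e. $\w$. Thus $Ker(A)$ and $Im(G_{\Phi}(\w))$ are mutually orthogonal complements in $\C^m$; in particular their intersection is $\{0\}$, which re-derives $A\in\mathcal{R}$ through Lemma \ref{lemma-Radu} and is consistent with the dimension count $\dim(Ker(A))=m-\ell$, which forces $\text{rk}(G_{\Phi}(\w))=\ell$ a.e.

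With $M=Im(G_{\Phi}(\w))$ and $N=Ker(A)=M^{\perp}$ I would then read off the Friedrichs angle directly from its definition. When $M$ or $N$ is $\{0\}$ one has $\g[M,N]=0$ by convention; otherwise $M\cap N=\{0\}$, so $(M\cap N)^{\perp}=\C^m$, and the supremum defining $\g[M,N]$ ranges over unit vectors $x\in M$ and $y\in N=M^{\perp}$, for which $\langle x,y\rangle=0$. Hence $\g[M,N]=0$ in every case and $\s\,[Ker(A),Im(G_{\Phi}(\w))]=\sqrt{1-\g[M,N]^2}=1$ for a.e. $\w$. So condition (2) of Theorem \ref{thm-frames} holds with $\delta=1$, and that theorem yields that $E(\Psi)$ is a frame for $V$.

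There is really no substantial obstacle here: the statement is the extreme, easiest instance of Theorem \ref{thm-frames}, where $Ker(A)$ is placed orthogonally to every range $Im(G_{\Phi}(\w))$. The only points needing a little care are the degenerate cases in the definition of the Friedrichs angle (the fibers where $G_{\Phi}(\w)=0$, or the case $\ell=m$ with $Ker(A)=\{0\}$) and the identity $Im(G)=Ker(G)^{\perp}$ for self-adjoint $G$; both are routine. It may also be worth remarking, as the computation shows, that the hypothesis $A\in\mathcal{R}$ is in fact automatic once $Ker(A)=Ker(G_{\Phi}(\w))$ a.e., so the corollary could be stated with one fewer assumption.
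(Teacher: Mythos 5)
Your proof is correct and follows essentially the same route as the paper: both reduce to Theorem \ref{thm-frames} and observe that, since $Ker(A)\subseteq Ker(G_{\Phi}(\w))=Im(G_{\Phi}(\w))^{\perp}$ by self-adjointness, the Friedrichs angle condition holds with $\s[Ker(A),Im(G_{\Phi}(\w))]=1$. Your side remark that $Ker(A)=Ker(G_{\Phi}(\w))$ a.e.\ already forces $A\in\mathcal{R}$ (via Lemma \ref{lemma-Radu}) is also accurate, though the paper keeps that hypothesis explicit.
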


\begin{proof}
Since $Ker(A)\subseteq Ker(G_{\Phi}(\w))$ and $Ker(G_{\Phi}(\w))=Im(G_{\Phi}(\w))^{\perp}$, it follows that  $\s[Ker(A), Im(G_{\Phi}(\w))]=1$.
\end{proof}

\begin{example}
Let $V= S(\Phi)$ be the shift invariant space generated by $\Phi= \{\phi_1, \phi_2,\phi_3\}\subseteq L^2(\R),$ where  $\phi_1, \phi_2,\phi_3$ are defined by
\begin{align*}
\widehat{\phi_1}(\w)&= -8\chi_{[-\frac1{2}, \frac1{2}]}(\w)+ 4\chi_{[\frac1{2}, \frac3{2}]}(\w),\\
\widehat{\phi_2}(\w)&= \chi_{[-\frac1{2}, \frac1{2}]}(\w)+ 4\chi_{[\frac3{2}, \frac5{2}]}(\w),\\
\widehat{\phi_3}(\w)&= \chi_{[\frac1{2}, \frac3{2}]}(\w)+ 8\chi_{[\frac3{2}, \frac5{2}]}(\w).
\end{align*}
Then, the associated Gramian is
$$G_{\Phi}(\w)=
\left(
\begin{array}{ccc}
 80& -8& 4\\
 -8& 17& 32\\
 4& 32& 65\\
\end{array}
\right).
$$
It can be seen that $G_{\Phi}^2(\w)= 81G_{\Phi}(\w) $ and $\text{rk}(G_{\Phi}(w))=2$ for all $\w\in[-\frac1{2}, \frac1{2}].$
Thus, by Proposition \ref{gramiano-frame}, $E(\Phi)$ is a frame for $V$ and using \eqref{length-gramian}, $\ell(V)=2.$

On the other side, $Ker(G_{\Phi}(\w))= \text{span} \{(1,8,-4)\}.$ Then, $V$  satisfy the hypothesis of Corollary \ref{thm-frames-2}. In what follows,
we construct  all possible matrices $A\in\C^{2\times3}$  such that  $Ker(A)=  \text{span} \{(1,8,-4)\}$ and $A\in\mathcal{R}$.
These matrices  give frames $E(\Psi)$ with $\Psi=A\Phi.$

Since $Ker(A)=  \text{span} \{(1,8,-4)\},$ $A$ has the form
$$
A= \left(
\begin{array}{ccc}
 (4b-8a)& a& b\\
 (4d-8c)& c& d\\
\end{array}
\right),
$$
with $a,b,c,d\in\C.$

Now, $A\in\mathcal{R}$ if and only if $\text{rk}(AG_{\Phi}(\w)A^*)= 2.$ Now $\text{rk}(AG_{\Phi}(\w)A^*)= 2$ if and only if $\det(AG_{\Phi}(\w)A^*)\neq 0.$
Since $\det(AG_{\Phi}(\w)A^*)= (81)^3(ad-bc)^2,$ we conclude that $A\in\mathcal{R}$ if and only if $ad-bc\neq 0.$
\hfill
$\blacksquare$
\end{example}

Condition $(2)$ in Theorem \ref{thm-frames} is a geometric property that $A$ and $G_{\Phi}$
need to satisfy in order to $A$ preserves the frame property of $E(\Phi)$ over $E(\Psi)$. We now state a result
on which we give an analytic way to express
conditions $(1)$ and $(2)$ of Theorem \ref{thm-frames}.
In this case, $\ell$ will be exactly the length of the SIS.

\begin{theorem}\label{thm-Radu}
Let $ \Phi=\{\phi_1,\cdots, \phi_m\}\subseteq L^2(\R^d)$ such that $E(\Phi)$ is a frame for $V=S(\Phi)$ and suppose that $\ell(V)=\ell\leq m$.
Let $A\in\C^{\ell\times m}$ be a matrix and consider $\Psi=\{\psi_1,\cdots, \psi_{\ell}\}$ where $\Psi=A\Phi$.
Then, $E(\Psi)$ is a
frame for $V$ if and only if  the following  condition between $A$ and $G_{\Phi}$ is satisfied: $AA^*$ is invertible and
\begin{equation}\label{condition-Radu}
 \esssup_{\w\in [-\frac1{2}, \frac1{2}]^d} \|(I_m-A^*(AA^*)^{-1}A)G_{\Phi}(\w)G_{\Phi}^{\dagger}(\w)\|<1.
\end{equation}
Here, $I_m$ is the identity in $\C^{m\times m}$ and $G_{\Phi}^{\dagger}(\w)$ is the Moore-Penrose pseudoinverse of
$G_{\Phi}(\w)$.
\end{theorem}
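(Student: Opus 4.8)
The plan is to deduce Theorem~\ref{thm-Radu} from Theorem~\ref{thm-frames} by rewriting its conditions (1) and (2) as the single analytic statement displayed in the theorem. The key observation is that both matrices occurring in \eqref{condition-Radu} are orthogonal projections: when $AA^*$ is invertible, $P_A:=A^*(AA^*)^{-1}A$ is Hermitian and idempotent with range $Im(A^*)=Ker(A)^\perp$, so $I_m-A^*(AA^*)^{-1}A$ is the orthogonal projection of $\C^m$ onto $Ker(A)$; and, because $G_{\Phi}(\w)$ is Hermitian positive semidefinite, $G_{\Phi}(\w)G_{\Phi}^{\dagger}(\w)$ is the orthogonal projection onto $Im(G_{\Phi}(\w))$. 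Hence the quantity in \eqref{condition-Radu} equals $\|P_{Ker(A)}\,P_{Im(G_{\Phi}(\w))}\|$, the norm of a product of two orthogonal projections.

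Next I would record the classical identity linking that norm to the Friedrichs angle (see \cite{Deu95}): for subspaces $M,N\subseteq\C^m$ with $M\cap N=\{0\}$ one has ${\bm{\mathcal{G}}}[M,N]=\|P_MP_N\|$, and therefore $\s[M,N]=\sqrt{1-\|P_MP_N\|^2}$. Indeed, if $M=\{0\}$ or $N=\{0\}$ both sides vanish; and if $M,N\neq\{0\}$, then $M\cap N=\{0\}$ forbids $M\subseteq N$ and $N\subseteq M$, so $(M\cap N)^{\perp}=\C^m$ and the defining supremum of ${\bm{\mathcal{G}}}[M,N]$ becomes $\sup\{|\langle x,y\rangle|:x\in M,\ y\in N,\ \|x\|=\|y\|=1\}$, which equals $\|P_MP_N\|$ since $|\langle x,y\rangle|=|\langle x,P_My\rangle|\le\|x\|\,\|P_My\|$ with equality for a suitable choice of $x$.

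With these two preliminaries in hand the equivalence is a direct translation. For the forward implication, assume (1) and (2). Since $\ell=\ell(V)$ and $A\in\mathcal{R}$, the Remark after \eqref{pinta-R} gives $\text{rk}(A)=\ell$, hence $AA^*$ is invertible. By (1) and Lemma~\ref{lemma-Radu}, $Ker(A)\cap Im(G_{\Phi}(\w))=\{0\}$ for a.e.\ $\w$, so by the identity above $\|(I_m-A^*(AA^*)^{-1}A)G_{\Phi}(\w)G_{\Phi}^{\dagger}(\w)\|={\bm{\mathcal{G}}}[Ker(A),Im(G_{\Phi}(\w))]=\sqrt{1-\s[Ker(A),Im(G_{\Phi}(\w))]^2}\le\sqrt{1-\delta^2}<1$ for a.e.\ $\w$; taking the essential supremum yields \eqref{condition-Radu}. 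For the converse, assume $AA^*$ is invertible and \eqref{condition-Radu} holds, say with essential supremum $\le\kappa<1$. Then $\text{rk}(A)=\ell$, so $\dim Ker(A)=m-\ell$; and if on a set of positive measure there were a unit vector $v\in Ker(A)\cap Im(G_{\Phi}(\w))$, then $(I_m-A^*(AA^*)^{-1}A)G_{\Phi}(\w)G_{\Phi}^{\dagger}(\w)v=v$, contradicting $\kappa<1$. Hence $Ker(A)\cap Im(G_{\Phi}(\w))=\{0\}$ a.e., which by Lemma~\ref{lemma-Radu} means $A\in\mathcal{R}$, i.e.\ (1); and then $\s[Ker(A),Im(G_{\Phi}(\w))]=\sqrt{1-\|(I_m-A^*(AA^*)^{-1}A)G_{\Phi}(\w)G_{\Phi}^{\dagger}(\w)\|^2}\ge\sqrt{1-\kappa^2}>0$ a.e., i.e.\ (2) with $\delta=\sqrt{1-\kappa^2}$. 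An appeal to Theorem~\ref{thm-frames} then finishes the proof.

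The part requiring the most care is the projection–angle identity ${\bm{\mathcal{G}}}[M,N]=\|P_MP_N\|$ under $M\cap N=\{0\}$, together with the identification of $I_m-A^*(AA^*)^{-1}A$ and $G_{\Phi}(\w)G_{\Phi}^{\dagger}(\w)$ as the orthogonal projections onto $Ker(A)$ and $Im(G_{\Phi}(\w))$; once these are in place everything else is mechanical. One should also note that the degenerate situations are harmless: if $\ell=m$ then $A$ is invertible, $I_m-A^*(AA^*)^{-1}A=0$, and \eqref{condition-Radu} holds trivially, in agreement with $\s\equiv 1$; and if $G_{\Phi}(\w)=0$ on some set, then $G_{\Phi}(\w)G_{\Phi}^{\dagger}(\w)=0$ there and the inequality is again automatic.
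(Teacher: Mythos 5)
Your proposal is correct and follows essentially the same route as the paper: both deduce the theorem from Theorem~\ref{thm-frames} by recognizing $I_m-A^*(AA^*)^{-1}A$ and $G_{\Phi}(\w)G_{\Phi}^{\dagger}(\w)$ as the orthogonal projections onto $Ker(A)$ and $Im(G_{\Phi}(\w))$ and translating condition (2) via the identity $\g[M,N]=\|P_MP_N\|$ when $M\cap N=\{0\}$, with the same fixed-vector contradiction argument for the converse. The only difference is cosmetic: you prove the projection--angle identity directly, whereas the paper cites Proposition 2.2 of \cite{ACRS05}, and you spell out the converse step for condition (2) that the paper dismisses as ``similar arguments.''
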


\begin{proof}
Let us first prove that conditions $(1)$ and $(2)$ of Theorem \ref{thm-frames}
imply condition (\ref{condition-Radu}).

Note that condition $(2)$ is equivalent to $\g [Ker(A), Im(G_{\Phi}(\w))]\leq \sqrt{1-\delta^2}$ for
a.e. $\w\in [-\frac1{2}, \frac1{2}]^d$. Now, since $Ker(A)\cap Im(G_{\Phi}(\w))=\{0\}$ for a.e
$\w\in [-\frac1{2}, \frac1{2}]^d$, it follows from Proposition 2.2 in \cite{ACRS05} that
$\g [Ker(A), Im(G_{\Phi}(\w))]=\|P_{Ker(A)}P_{Im(G_{\Phi}(\w))}\|$ for a.e
$\w\in [-\frac1{2}, \frac1{2}]^d$, where $P_{Ker(A)}$ and $P_{Im(G_{\Phi}(\w))}$ denote the orthogonal projection onto
$Ker(A)$ and $Im(G_{\Phi}(\w)$ respectively.
Using the Moore-Penrose pseudoinverse we can write
$\|P_{Ker(A)}P_{Im(G_{\Phi}(\w))}\|=\|(I_m-A^{\dagger}A)G_{\Phi}(\w)G_{\Phi}^{\dagger}(\w)\|$.
Finally, since $A$ is full rank, we replace $A^{\dagger}=A^*(AA^*)^{-1}$ and then the assertion follows.

Conversely. Suppose that (\ref{condition-Radu}) holds. Then,
we have that $\|P_{Ker(A)}P_{Im(G_{\Phi}(\w))}\|\leq\gamma<1$ for all
$\w\in [-\frac1{2}, \frac1{2}]^d\setminus Z$ where $Z$ is a set with Lebesgue measure zero and
$\gamma=\esssup_{\w\in [-\frac1{2}, \frac1{2}]^d} \|(I_m-A^*(AA^*)^{-1}A)G_{\Phi}(\w)G_{\Phi}^{\dagger}(\w)\|$.

Fix $\w\in[-\frac1{2}, \frac1{2}]^d\setminus Z$ and suppose there exists $x\in Ker(A)\cap Im(G_{\Phi}(\w))$ with $\|x\|=1$.
Then, $\|P_{Ker(A)}P_{Im(G_{\Phi}(\w))}x\|=\|x\|=1$ which is a contradiction. Therefore,
$Ker(A)\cap Im(G_{\Phi}(\w))=\{0\}$ for all $\w\in[-\frac1{2}, \frac1{2}]^d\setminus Z$ and this gives that $(1)$
in Theorem \ref{thm-frames} is satisfied. Having $Ker(A)\cap Im(G_{\Phi}(\w))=\{0\}$ for a.e.
$\w\in[-\frac1{2}, \frac1{2}]^d$, condition $(2)$ of Theorem \ref{thm-frames} can be obtained with similar arguments as in the
first part of this proof.

\end{proof}

\medskip

Now, we consider the case of Riesz bases. We obtain necessary and sufficient conditions on $A$ in order to preserve Riesz  bases of translates.
Different from the frame case,
the conditions on $A$ do not depend on the shift invariant space.

First, observe that, if  $ \Phi=\{\phi_1,\cdots, \phi_m\}\subseteq L^2(\R^d)$ is such that $E(\Phi)$ is an Riesz (orthonormal)
basis for $V=S(\Phi)$, by Proposition \ref{gramiano-frame}, $G_{\Phi}(\w)$ is an $m\times m$ invertible  matrix for a.e.
$\w\in[-\frac1{2}, \frac1{2}]^d$. Thus, using \eqref{length-gramian} we have that $\ell(V)=m$.
Therefore, in order to preserve  Riesz (orthonormal) bases, we need to consider squared matrices $A\in\C^{m\times m}$.

\begin{proposition}\label{thm-for-riesz}
Let $ \Phi=\{\phi_1,\cdots, \phi_m\}\subseteq L^2(\R^d)$ such that $E(\Phi)$ is a Riesz basis  for $V=S(\Phi)$.
Let $A\in\C^{m\times m}$ be a matrix and consider $\Psi=\{\psi_1,\cdots, \psi_m\}$ with
$\Psi=A\Phi$.
Then, $E(\Psi)$ is a Riesz basis for $V$ if and only if $A$ is an invertible matrix.
\end{proposition}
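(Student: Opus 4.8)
The plan is to reduce the statement to the Gramian characterisation of Riesz bases in Proposition \ref{gramiano-frame}(2), combined with the conjugation formula $G_{\Psi}(\w)=AG_{\Phi}(\w)A^*$ from Proposition \ref{gramiano-A}. Throughout, fix Riesz bounds $0<\alpha\le\beta$ for $E(\Phi)$, so that by Proposition \ref{gramiano-frame}(2) we have, for a.e. $\w\in[-\frac1{2},\frac1{2}]^d$, that $G_{\Phi}(\w)$ is invertible with $\|G_{\Phi}(\w)\|\le\beta$ and $\|G_{\Phi}(\w)^{-1}\|\le 1/\alpha$.

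For the forward implication, suppose $E(\Psi)$ is a Riesz basis for $V$. By Proposition \ref{gramiano-frame}(2), $G_{\Psi}(\w)=AG_{\Phi}(\w)A^*$ is invertible for a.e. $\w$, so $m=\text{rk}(AG_{\Phi}(\w)A^*)\le\text{rk}(A)$, which forces $\text{rk}(A)=m$; hence $A$ is invertible. (One may also note that $E(\Phi)$ being a Riesz basis forces $\ell(V)=m$ by \eqref{length-gramian}, consistent with $A$ being a square full rank matrix.)

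For the converse, assume $A$ is invertible. First, $S(\Psi)=V$: the inclusion $S(\Psi)\subseteq S(\Phi)=V$ is clear, while $\Phi=A^{-1}\Psi$ expresses each $\phi_j$ as a linear combination of the $\psi_i$, so $V=S(\Phi)\subseteq S(\Psi)$. Next, $G_{\Psi}(\w)=AG_{\Phi}(\w)A^*$ is a product of invertible matrices, hence invertible a.e., and by submultiplicativity of the operator norm,
$$\|G_{\Psi}(\w)\|\le\|A\|^2\|G_{\Phi}(\w)\|\le\|A\|^2\beta, \qquad \|G_{\Psi}(\w)^{-1}\|=\|(A^*)^{-1}G_{\Phi}(\w)^{-1}A^{-1}\|\le\|A^{-1}\|^2\|G_{\Phi}(\w)^{-1}\|\le\frac{\|A^{-1}\|^2}{\alpha}$$
for a.e. $\w\in[-\frac1{2},\frac1{2}]^d$. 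Applying the equivalence $(a)\Leftrightarrow(d)$ of Proposition \ref{gramiano-frame}(2) with $\alpha'=\alpha/\|A^{-1}\|^2$ and $\beta'=\|A\|^2\beta$, we conclude that $E(\Psi)$ is a Riesz basis for $V$.

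There is no deep obstacle here; the only point requiring care is that \emph{Riesz basis} packages two requirements — completeness in $V$ together with the uniform two-sided Gramian bound — and invertibility of $A$ is exactly what secures both: completeness through the relation $\Phi=A^{-1}\Psi$, and the uniform bounds through the operator-norm estimates above. It is also worth recording that the argument uses nothing about $V$ beyond the constants $\alpha,\beta$, which is the reason that, in contrast to the frame case of Theorem \ref{thm-frames}, the condition on $A$ is independent of the shift invariant space; the orthonormal basis case follows along the same lines, taking $\alpha=\beta=1$ and observing that $AG_{\Phi}(\w)A^*=I_m$ for a.e. $\w$ forces $A$ to be unitary.
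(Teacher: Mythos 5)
Your proof is correct and follows essentially the same route as the paper: the conjugation formula $G_{\Psi}(\w)=AG_{\Phi}(\w)A^*$ together with the Gramian characterisation of Riesz bases in Proposition \ref{gramiano-frame}(2), with the same norm estimates for the converse and invertibility of the Gramian for the forward direction. Your explicit verification that $S(\Psi)=V$ via $\Phi=A^{-1}\Psi$ is a small point the paper leaves implicit, but it does not change the argument.
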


\begin{proof}
Let $0<\alpha\leq \beta$ be the Riesz bounds for $E(\Phi)$.
Suppose first that $A$ is invertible.
Then, for a.e. $\w\in [-\frac1{2}, \frac1{2}]^d,$
\begin{align*}
\|G_{\Psi}(\w)\|&= \|AG_{\Phi}(\w)A^*\|\le \|A\|^2\|G_{\Phi}(\w)\|\le \|A\|^2\beta,\\
\|(G_{\Psi}(\w))^{-1}\|&\le \|A^{-1}\|^2\|(G_{\Phi}(\w))^{-1}\|\le \|A^{-1}\|^2\frac{1}{\alpha}.
\end{align*}
Therefore, by Proposition \ref{gramiano-frame}, $E(\Psi)$ is a Riesz basis for $V$.

Conversely, if $E(\Psi)$ is a Riesz basis for $V,$ it follows from Proposition \ref{gramiano-frame} that
$G_{\Psi}(\w)$ is invertible for a.e. $\w\in[-\frac1{2}, \frac1{2}]^d.$ Since $G_{\Phi}(\w)$ is invertible
 as well for a.e. $\w\in[-\frac1{2}, \frac1{2}]^d$
we have that $A$ is invertible.
\end{proof}

\begin{remark}
The above result shows that every invertible matrix preserves Riesz bases of translates.
On the other side, it is known that the set $\{A\in\C^{m\times m}\colon \det(A)=0\}$ has Lebesgue measure zero.
Thus, we have that almost every matrix (exactly those that are invertible) preserves Riesz bases of translates.
We can then connect this with Bownik and Kaiblinger's result as follows. If in addition to the hypothesis of Theorem \ref{thm-bownik}
we ask the Riesz basis condition on $E(\Phi),$ we have that the set $\mathcal{R}$ is equal to $\{A\in\C^{m\times m}\colon \det(A)\neq0\}.$
\end{remark}

It is worth to mention that Proposition \ref{thm-for-riesz} can be proven as a corollary of Theorem \ref{thm-frames} or independently from this result using Ostrowski's
Theorem \cite[Theorem 4.5.9]{HJ90}.
Applying this result to
$G=G_{\Phi}(\w)$ for a.e.  $\w\in [-\frac1{2}, \frac1{2}]^d$, uniform bounds for the eigenvalues of $AG_{\Phi}(\w)A^*$ can be found.

For frames, in the special case when the initial set of generators has exactly $\ell(V)$ elements, we have that
every invertible matrix yields a set of generators that is a frame for $V.$
This is stated in the next result and its proof is analogous to the proof of Proposition \ref{thm-for-riesz}. It can be also viewed as a corollary of Theorem \ref{thm-Radu}.

\begin{theorem}
Let $ \Phi=\{\phi_1,\cdots, \phi_{\ell}\}\subseteq L^2(\R^d)$ such that $E(\Phi)$ is a frame for $V=S(\Phi)$ and suppose that $\ell(V)=\ell$.
Let $A\in\C^{\ell\times \ell}$ be a matrix and consider $\Psi=\{\psi_1,\cdots, \psi_{\ell}\}$ where $\Psi=A\Phi$.
Then, $E(\Psi)$ is a frame for $V$ if and only if $A$ is an invertible matrix.
\end{theorem}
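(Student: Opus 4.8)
The plan is to read off this statement as the specialization of the results already proved to the case where the number of generators $m$ equals $\ell=\ell(V)$. For a square matrix $A\in\C^{\ell\times\ell}$, invertibility is equivalent to $AA^*$ being invertible and also to $Ker(A)=\{0\}$; under this condition the geometric criterion of Theorem \ref{thm-frames} and the analytic criterion of Theorem \ref{thm-Radu} both become trivial, so the cleanest route is to deduce the statement from Theorem \ref{thm-Radu} with $m=\ell$, although a direct argument patterned on the proof of Proposition \ref{thm-for-riesz} via Gramian bounds is equally short.

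First I would treat the forward implication. Assume $A$ is invertible. Then $AA^*$ is invertible and $A^*(AA^*)^{-1}A=A^*(A^*)^{-1}A^{-1}A=I_m$, so the operator appearing in \eqref{condition-Radu} is the zero operator and its essential supremum over $\w$ is $0<1$; Theorem \ref{thm-Radu} then gives that $E(\Psi)$ is a frame for $V$. If one prefers the hands-on route: conjugation by an invertible matrix preserves rank, so $\text{rk}(AG_{\Phi}(\w)A^*)=\text{rk}(G_{\Phi}(\w))$ a.e., i.e.\ $A\in\mathcal{R}$, and $Ker(A)=\{0\}$ gives $\s[Ker(A),Im(G_{\Phi}(\w))]=1$, so Theorem \ref{thm-frames} applies; explicitly, Propositions \ref{gramiano-A} and \ref{prop-jorge-rectangular} yield $\|G_{\Psi}(\w)\|\le\|A\|^2\beta$ and $\lambda_{-}(G_{\Psi}(\w))\ge\sigma(A)^2\lambda_{-}(G_{\Phi}(\w))\ge\alpha/\|A^{-1}\|^2$, whence $\Sigma(G_{\Psi}(\w))\subseteq[\alpha/\|A^{-1}\|^2,\|A\|^2\beta]\cup\{0\}$ a.e., and Proposition \ref{gramiano-frame}(3) finishes.

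For the converse, assume $E(\Psi)$ is a frame for $V$. A frame spans its ambient space and $E(\Psi)\subseteq V$, so $\overline{\text{span}}\,E(\Psi)=V$, that is $S(\Psi)=V$, and therefore $\ell(S(\Psi))=\ell(V)=\ell$. By \eqref{length-gramian} this means $\esssup_{\w}\text{rk}(G_{\Psi}(\w))=\ell$, so on a set of positive measure $G_{\Psi}(\w)=AG_{\Phi}(\w)A^*$ is an invertible $\ell\times\ell$ matrix; since $\text{rk}(AG_{\Phi}(\w)A^*)\le\text{rk}(A)$ this forces $\text{rk}(A)=\ell$, i.e.\ $A$ is invertible. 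Equivalently, arguing by contraposition: if $A$ were singular then $\text{rk}(G_{\Psi}(\w))\le\text{rk}(A)<\ell$ for every $\w$, so $\ell(S(\Psi))<\ell(V)$, contradicting $S(\Psi)=V$.

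I do not expect a genuine obstacle, as every ingredient is already in place. The only point deserving a little care is the converse: one should extract invertibility of $A$ through the length identity \eqref{length-gramian}, using that the frame property forces $S(\Psi)=V$, rather than attempting to read it off directly from the spectral bounds on $G_{\Psi}$.
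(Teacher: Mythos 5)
Your proof is correct and follows essentially the route the paper itself indicates, namely deducing the result from Theorem \ref{thm-Radu} (for square invertible $A$ the projection $I_m-A^*(AA^*)^{-1}A$ vanishes) or, equivalently, arguing as in Proposition \ref{thm-for-riesz} via the Gramian bounds. Your converse via the length identity \eqref{length-gramian} is a sensible adaptation (since here $G_{\Phi}(\w)$ need not be invertible a.e., unlike the Riesz case), and it is equivalent to the paper's suggested reading of Theorem \ref{thm-Radu}, which directly forces $AA^*$, hence $A$, to be invertible.
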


Finally,  in case of orthonormal bases, we have:

\begin{proposition}\label{thm-for-bons}
Let $ \Phi=\{\phi_1,\cdots, \phi_m\}\subseteq L^2(\R^d)$ such that $E(\Phi)$ is an orthonormal basis for $V=S(\Phi)$ and let $A\in\C^{m\times m}$ be a matrix. Consider $\Psi=A\Phi$. Then,
$E(\Psi)$ is an orthonormal basis for $V$
if and only if $A$ is an unitary matrix.
\end{proposition}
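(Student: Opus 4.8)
The plan is to reduce everything to the Gramian characterization, exactly in the spirit of the proofs of Proposition \ref{thm-for-riesz} and of the frame results above. The first observation I would record is that $E(\Phi)$ is an orthonormal basis for $V=S(\Phi)$ if and only if $G_\Phi(\w)=I_m$ for a.e. $\w\in[-\frac12,\frac12]^d$. Indeed, an orthonormal basis is precisely a Riesz basis with bounds $\alpha=\beta=1$ (Parseval plus completeness), so by Proposition \ref{gramiano-frame}(2) this is equivalent to $I_m\le G_\Phi(\w)\le I_m$ a.e., that is, $G_\Phi(\w)=I_m$ a.e.; alternatively one checks directly that $[G_\Phi(\w)]_{ij}$ is, through the periodization isometry, the sequence of inner products $\{\langle \phi_i,T_k\phi_j\rangle\}_{k\in\Z^d}$, cf. \cite{Bow00}. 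Combining this with Proposition \ref{gramiano-A} yields, for the new system, $G_\Psi(\w)=AG_\Phi(\w)A^*=AA^*$ for a.e. $\w$, so the whole statement will follow once we determine when $AA^*=I_m$ together with $S(\Psi)=V$.

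For sufficiency, assume $A$ is unitary. Then $A$ is invertible, so $\Phi=A^{-1}\Psi$ and hence $S(\Psi)=S(\Phi)=V$ (this is also covered by Proposition \ref{thm-for-riesz}, or by Theorem \ref{thm-bownik} since a unitary matrix lies in $\mathcal R$ for $\ell=m$). Moreover $G_\Psi(\w)=AA^*=I_m$ for a.e. $\w$, so by the characterization just recalled $E(\Psi)$ is an orthonormal basis for $V$.

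For the converse, suppose $E(\Psi)$ is an orthonormal basis for $V$. In particular $E(\Psi)$ is a Riesz basis for $V=S(\Psi)$, so Proposition \ref{thm-for-riesz} forces $A$ to be invertible and $S(\Psi)=V$; the orthonormality then gives $G_\Psi(\w)=I_m$ for a.e. $\w$, i.e. $AA^*=I_m$ by Proposition \ref{gramiano-A}, so $A$ is unitary. I do not expect a real obstacle here: the matrix statement $AA^*=I_m$ for a square matrix gives unitarity immediately, and the only point deserving a line of care is the ``for $V$'' clause, namely ensuring that a matrix producing an \emph{orthonormal system} of translates actually produces a \emph{generating} set of $V$ — which is exactly where invertibility of $A$ (equivalently $A\in\mathcal R$, recall the Remark after Theorem \ref{thm-bownik}) is invoked. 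In short, the result is essentially a specialization of Propositions \ref{gramiano-A} and \ref{thm-for-riesz}.
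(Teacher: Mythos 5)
Your proof is correct, but it takes a different route from the paper's. The paper argues directly in the time domain: it computes
$\langle T_k\psi_i, T_{k'}\psi_{i'}\rangle=\sum_{j,j'}a_{ij}\overline{a_{i'j'}}\langle T_k\phi_j, T_{k'}\phi_{j'}\rangle=(AA^*)_{ii'}\delta(k-k')$,
so orthonormality of $E(\Psi)$ is literally the statement $AA^*=I_m$, and completeness is obtained by writing $\Phi=A^*\Psi$, which expresses every $T_k\phi_j$ as a finite linear combination of the $T_k\psi_i$. You instead pass to the Fourier side: you first establish the characterization ``$E(\Phi)$ is an orthonormal basis iff $G_\Phi(\w)=I_m$ a.e.'' (via Proposition \ref{gramiano-frame}(2) with $\alpha=\beta=1$), combine it with $G_\Psi(\w)=AG_\Phi(\w)A^*=AA^*$ from Proposition \ref{gramiano-A}, and settle the ``for $V$'' issue through invertibility of $A$ (Proposition \ref{thm-for-riesz}, or directly $\Phi=A^{-1}\Psi$). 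Both arguments are sound; the paper's is self-contained and more elementary, needing only bilinearity of the inner product and the algebraic identity $\Phi=A^*\Psi$, while yours is uniform with the Gramian machinery used for the frame and Riesz results and records the useful fact $G_\Phi\equiv I_m$ along the way. One small streamlining: in the converse direction the appeal to Proposition \ref{thm-for-riesz} is not needed --- once $E(\Psi)$ is an orthonormal basis for $V$, completeness gives $S(\Psi)=V$, hence $G_\Psi(\w)=I_m$ a.e., i.e. $AA^*=I_m$, and for a square matrix this alone yields unitarity.
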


\begin{proof}
Note that if $A=\{a_{ij}\}_{i,j}$ then,
$$
\langle T_k\psi_i, T_{k'}\psi_{i'}\rangle=\sum_{j,j'}^ma_{ij}\overline{a_{i'j'}}
\langle T_k\phi_j, T_{k'}\phi_{j'}\rangle=(AA^*)_{ii'}\delta(k-k')
$$
and from here it follows that $E(\Psi)$ is an orthonormal set if and only if $A$ is unitary.

For the completeness of $E(\Psi)$ on $V$ we use that, since $A$ is unitary and $\Psi=A\Phi,$ we can write $\Phi=A^* \Psi.$ Then
$
T_k\phi_j=\sum_{i=1}^m \overline{a_{ij}} T_k\psi_i, \quad k\in\Z,\, j=1, \ldots, m
$
and the result follows.
\end{proof}

 Theorem \ref{thm-for-riesz} shows that almost every square matrix maps  Riesz bases generators in Riesz bases generators.
For the case of frames it might happen that
condition (2) of Theorem \ref{thm-frames} is not satisfied for any matrix $A.$
That is exactly what we show in the following example on which we present a finitely
generated SIS for which any linear combination of its generators
yields to a minimal set of generators that is not a frame of translates.

\begin{example}
Let $\phi_1, \phi_2\in L^2(\R^2)$ defined by
$$\widehat{\phi_1}(\w_1,\w_2)=-\sin(2\pi\w_1)\chi_{[-\frac1{2}, \frac1{2}]^2}(\w_1,\w_2)$$
and
$$\widehat{\phi_2}(\w_1,\w_2)=e^{2\pi i \w_2}\cos(2\pi\w_1)\chi_{[-\frac1{2}, \frac1{2}]^2}(\w_1,\w_2).$$

Consider the shift invariant space generated by $\Phi=\{\phi_1,\phi_2\}$, $V=S(\phi_1,\phi_2)$.
We will see that $E(\phi_1,\phi_2)$
is a frame for $V$, that $V$ is a principal SIS and that $E(A\Phi)$ is not a frame for $V$  for any matrix $A\in \C^{1\times2}$.

We first compute $G_{\Phi}(\w_1,\w_2)$. Let $(\w_1,\w_2)\in [-\frac1{2}, \frac1{2}]^2$. Then, we have
$$G_{\Phi}(\w_1,\w_2)=
\Big(
\begin{array}{cc}
 \sin^2(2\pi\w_1)& -e^{-2\pi i \w_2}\sin(2\pi\w_1)\cos(2\pi\w_1)\\
-e^{2\pi i \w_2}\sin(2\pi\w_1)\cos(2\pi\w_1) & \cos^2(2\pi\w_1)
\end{array}
\Big).
$$
Note that $G_{\Phi}(\w_1,\w_2)=G_{\Phi}^2(\w_1,\w_2)$ for all
$(\w_1,\w_2)\in [-\frac1{2}, \frac1{2}]^2$. Thus, by Proposition \ref{gramiano-frame}, $E(\phi_1,\phi_2)$
is a frame for $V.$ Further, it can be seen that $\text{rk}(G_{\Phi}(\w_1, \w_2))=1$
for $a.e.$ $(\w_1,\w_2)\in [-\frac1{2}, \frac1{2}]^2$
and then $V$ has length 1. Moreover, $V$ is the Paley Wiener space
$PW=\{f\in L^2(\R^2): \,supp(\widehat{f})\subseteq  [-\frac1{2}, \frac1{2}]^2\}$.

Let $A\in \C^{1\times2}$. Without loss of generality we can suppose that $A=(a_1 \,\,a_2)$ with
$|a_1|^2+|a_2|^2=1$.
Then, $A$ can be written as $A=(\cos(\theta)e^{2\pi i \beta}\,\, \,\,\sin(\theta)e^{2\pi i \beta'})$ for
$\theta\in [0,\frac{\pi}{2}]$ and $\beta, \beta'\in \R$.

Therefore, the Gramian associated to $A\Phi$ is
\begin{align*}
AG_{\Phi}(\w_1,\w_2)A^*&=
 \sin^2(2\pi\w_1) \cos^2(\theta) + \cos^2(2\pi\w_1) \sin^2(\theta)\\
 &-2\cos(2\pi(w_2-\beta'+\beta))\sin(2\pi\w_1)\cos(2\pi\w_1)\sin(\theta)\cos(\theta).
 \end{align*}

Observe that for each $\theta, \beta$ and $\beta'$ fixed,
$$
AG_{\Phi}(\w_1,\w_2)A^*\neq 0\quad  \mbox {for a.e. }  (\w_1,\w_2)\in [-\frac1{2}, \frac1{2}]^2.
$$
In particular,
$$
\text{rk}(G_{\Phi}(\w_1,\w_2))=\text{rk}(AG_{\Phi}(\w_1,\w_2)A^*) \quad  \mbox {for a.e. }  (\w_1,\w_2)\in [-\frac1{2}, \frac1{2}]^2
$$
and then, every matrix $A$ preserves generators.

Let $\tilde{\w}_2\in[-\frac1{2}, \frac1{2}]$ such that $\tilde{\w}_2=\beta'-\beta+k$ for some $k\in\Z$.
Then,
\begin{align*}
AG_{\Phi}(\w_1,\tilde{\w}_2)A^*&=
 \sin^2(2\pi\w_1) \cos^2(\theta) + \cos^2(2\pi\w_1) \sin^2(\theta)\\
 &-2\sin(2\pi\w_1)\cos(2\pi\w_1)\sin(\theta)\cos(\theta)\\
 &=\sin^2(2\pi\w_1-\theta).
 \end{align*}
Now, taking  $\tilde{\w}_1=\frac{\theta}{2\pi},$ we get
$AG_{\Phi}(\tilde{\w}_1,\tilde{\w}_2)A^*=0$.
Then, since the Gramian associated to $A\Phi$ is a continuos function with a zero, condition $(b)$ in item $(3)$
of Proposition \ref{gramiano-frame} can never be fullfiled. Thus,
$E(A\Phi)$ can not be a  frame for $V$.

\vspace{-0.5cm}
\hfill
$\blacksquare$
\end{example}



{\bf Acknowledgments.} The authors thank  J. Antezana and P. Massey  for  fruitful conversations. We also thank J. Antezana
for pointing  out to us  the result in Remark 2.10 from \cite{ACRS05}, and R. Balan for his suggestions
that gave rise to Theorem \ref{thm-Radu}.
Finally, we thank the anonymous referee for her/his comments that helped to improve the manuscript.

\end{document}